\newtheorem{theorem}{Theorem}
\newtheorem{corollary}[theorem]{Corollary}
\newtheorem{lemma}[theorem]{Lemma}
\newtheorem{proposition}[theorem]{Proposition}
\newcommand{\supp}{\operatorname{supp}}
\DeclareMathOperator*{\esssup}{ess\,sup}
\begin{document}
\title[Delsarte extremal problem and convolution roots]{Delsarte-Type Extremal Problems and Convolution Roots on Homogeneous Spaces}
\author{Mita D. Ramabulana}  
\address{Mita D. Ramabulana \endgraf
Department of Mathematics and Applied Mathematics \endgraf
University of Cape Town \endgraf
Private Bag X3, 7701, Rondebosch \endgraf
Cape Town, South Africa}
\email{mita.ramabulana@uct.ac.za}
\keywords{Locally compact group, homogeneous space,  positive definite function, isotropic, kernel, $G$-invariant, Tur\'an problem, Delsarte problem. \endgraf
\textit{Mathematics Subject Classification (2020):} 43A35, 22F30
}

\begin{abstract}
    For a locally compact group $G$ and compact subgroup $K$, we consider a Delsarte-type extremal problem for $G$-invariant positive definite kernels on the homogeneous space $G/K$, generalising a certain Tur\'an problem for isotropic positive definite kernels on the unit sphere $\mathbb{S}^d$ in $\mathbb{R}^{d+1}$. We exploit a correspondence between $G$-invariant kernels on $G/K$ and $K$-bi-invariant functions on $G$ to show that the Delsarte-type problem on a homogeneous space is equivalent to a Delsarte-type problem for $K$-bi-invariant functions on its group $G$ of transformations. We use this correspondence to show the existence of an extremal function for the Delsarte problem on the homogeneous space. In the case where $(G,K)$ is a compact Gelfand pair, we show the existence of $K$-bi-invariant convolution roots for positive definite $K$-bi-invariant functions, consequently obtaining the existence of a $G$-invariant convolution root for $G$-invariant positive definite kernels.
\end{abstract}

\maketitle
\maketitle

\section{Introduction}
Delsarte-type extremal problems for positive definite functions in the general setting of locally compact Abelian groups have been studied in \cite{kolrev, Rvsz2009TurnsEP, marcell-zsuzsa,berdyshevarevesz, ramabulana, berdrevram}. In the non-commutative \textit{compact} group setting, the Delsarte scheme is used in \cite{kolmatwei} to study the problem of mutually unbiased bases. One of the aims of this paper is to motivate the study of Delsarte-type problems on non-commutative groups. In particular, starting with a Delsarte-type problem on a homogeneous space $G/K$, we pose an equivalent Delsarte-type problem on its group $G$ of transformations. The group of transformations is, in general, non-commutative, so that we deviate from the commutative group setting quite naturally. Nevertheless, commutativity still plays a role since the collection of admissible functions of our extremal problem are integrable $K$-bi-invariant functions, the collection of which forms a commutative algebra under convolution.

Our point of departure is the Tur\'an problem for continuous positive definite kernels on the unit sphere in Euclidean space, which we now describe. We follow \cite{ gneiting, gneitingsupplement}. For an integer $d \ge 2$, let $$\mathbb{S}^d := \{x \in \mathbb{R}^{d+1}: \|x\|=1 \}$$ denote the unit sphere in $\mathbb{R}^{d+1}$. A function $f: \mathbb{S}^d \times \mathbb{S}^d \to \mathbb{C}$ is \textit{positive definite} if
\begin{equation*}\label{spherepd}
  \sum_{i=1}^{n}\sum_{j=1}^{n}f(x_i,x_j)c_i\overline{c_j} \ge 0
\end{equation*}
for all $n \in \mathbb{N}$, for all points $x_{i} \in \mathbb{S}^d$, and for all complex numbers $c_i \in \mathbb{C}$, with $1\le i \le n$. Let $\theta(x,y) = \arccos (\langle x, y \rangle)$ denote the great-circle distance between points $x$ and $y$ in $\mathbb{S}^d$, where $\langle x, y \rangle$ denotes the inner product of $x$ and $y$ in $\mathbb{R}^d$. A function $f: \mathbb{S}^d \times \mathbb{S}^d \to \mathbb{C}$ is called \textit{isotropic} if there exists a function $\psi: [0, \pi] \to \mathbb{C}$ such that
\begin{equation*}
    f(x,y) = \psi (\theta(x,y)).
\end{equation*}
The fact that an isotropic function $f$ factors as $f=\psi \circ \theta$ simply means that its values depend only on the great-circle distance between its inputs. From the definition, it is straightforward to produce examples of isotropic functions.\par
To be consistent with the notation in \cite{gneiting, gneitingsupplement}, let $\Psi^d$ denote the collection of continuous functions $\psi: [0, \pi] \to \mathbb{R}$ with $\psi(0)=1$ and such that the associated isotropic function $\psi \circ \theta:\mathbb{S}^d \times \mathbb{S}^d \to \mathbb{R}$ is positive definite. For $0 < c \le \pi$, let $\Psi^d_c$ denote the collection of functions $\psi \in \Psi^d$ such that $\psi(t)=0$ for $t \ge c$. 
We now state the Tur\'an problem for a spherical cap of radius $0< c \le \pi$. The problem is that of computing
\begin{equation}\label{spherturan}
    \mathcal{T}_{\mathbb{S}^d}(c):=\sup_{\psi \in \Psi^d_c}\int_{\mathbb{S}^d}\psi(\theta(x,y))\textup{d}y,
\end{equation}
where $x$ is an arbitrary point on $\mathbb{S}^d$ and the integral is with respect to the surface measure on the sphere. 

The Tur\'an problem for isotropic positive definite kernels on spheres is an open problem posed in \cite{gneitingsupplement} in connection with applications in spatial statistics. In this setup, the positive definite kernels are correlation functions, the sphere models the surface of the planet Earth, and the data comes from geophysical, meteorological and climatological applications. 

In particular, if the kernel \( \Phi(x, y) = \text{Cov}(Z(x), Z(y)) \) is the covariance of a real-valued random field \( \{Z(x): x \in \mathbb{S}^d \} \) on \( \mathbb{S}^d \), where, for each $x \in \mathbb{S}^d$, $Z(x)$ is a random variable, then for a fixed $x \in \mathbb{S}^d$, the quantity
\[
\int_{\mathbb{S}^d} \Phi(x, y) \, \mbox{d}y,
\]
represents the total covariance or interaction of \( Z(x) \) with the rest of the field.
So, the expression

\[
\sup_\Phi \int_{\mathbb{S}^d} \Phi(x, y) \, \mbox{d}y
\]
where $\Phi$ runs through the collection of such kernels, is the maximum total covariance that can be concentrated locally around \( x \), without violating the constraint of positive definiteness, and while ensuring zero correlation beyond radius \( c \).

The above Tur\'an problem for a spherical cap on the unit sphere motivates us to study a related Tur\'an problem on the non-commutative group $\mathrm{SO}(d+1)$ of $d \times d$ matrices having determinant $1$, after the observation that the sphere is a homogeneous space in the sense that $\mathbb{S}^d$ can be identified with $\mathrm{SO}(d+1)/\mathrm{SO}(d)$. 

Apart from applications in statistics, a variant of our Delsarte-type problem on non-commutative groups appears independently in \cite{maxmilian} in connection to sphere packing in metric geometries beyond Euclidean space. 

Another aim of the paper is to study the existence of convolution roots for positive definite kernels on homogeneous spaces. In particular, we extend a result of \cite{ziegel} on the existence of isotropic convolutions roots for positive definite functions on the sphere to homogeneous spaces $G/K$ where the pair $(G, K)$ forms a Gelfand pair. Our approach is again to show that the problem of the existence of convolution roots for kernels on the homogeneous space is equivalent to the problem of the existence of convolution roots on its group of transformations.

\section{Preliminaries and Notation}
In this section, we recall some notions and notation. Let $G$ be a locally compact group with Haar measure $\lambda_{G}$. For each $g \in G$, the assignment $\mu_g(U) := \lambda_{G}(Ug)$ for all Borel measurable subsets $U \subset G$ defines a left Haar measure, which is related to $\lambda_{G}$ by a constant $\Delta_{G} (g) > 0$ such that $\mu_g = \Delta_{G}(g)\lambda_{G}$. Let $\mathbb{R}^{\times}$ denote the multiplicative group of positive real numbers. The function $\Delta_{G}:G \to \mathbb{R}^{\times}$ is a continuous group homomorphism called the modular function of $G$ \cite[Proposition 2.24]{folland}). For any $g \in G$, let $R_{g}f$ denote the right translate of $f$ by $g$, given by $R_{g}f(x) = f(xg)$ for all $x \in G$. According to \cite[Proposition 2.24]{folland},  
\begin{equation*}
    \int_{G}R_{g}f(x)\textup{d}\lambda_{G}(x) = \int_{G}f(xg)\textup{d}\lambda_{G}(x) = \Delta_{G}(g^{-1})\int_{G}f(x)\textup{d}\lambda_{G}(x).
\end{equation*}
In particular, if $K$ is a compact subgroup of $G$ and $k \in K$, then $\Delta_{G}(k^{-1}) = 1$, and we have
\begin{equation}\label{compactmodularfuction}
    \int_{G}f(xk)\textup{d}\lambda_{G}(x) = \Delta_{G}(k^{-1})\int_{G}f(x)\textup{d}\lambda_{G}(x) = \int_{G}f(x)\textup{d}\lambda_{G}(x).
\end{equation}
If $K$ is a subgroup of $G$, then $K$ has a left-invariant Haar measure $\lambda_{K}$, unique to a positive constant. If $K$ is compact, then $\lambda_{K}(K) < \infty$, and so we shall always normalise the Haar measure on a compact (sub)group $K$ so that $\lambda_{K}(K) = 1$.

For $1 \le p < \infty$, we denote by $L^{p}(G)$ the usual Banach space of complex-valued functions on $G$ satisfying
\begin{equation*}\label{pnorm}
    \|f\|_{L^{p}(G)}:= \left(\int_{G}|f(g)|^p\mbox{d}\lambda_{G}(g) \right)^{1/p} < \infty.
\end{equation*}
We denote by $L^{\infty}(G)$ the collection of essentially bounded complex-valued functions, i.e., functions $f: G \to \mathbb{C}$ satisfying 
\begin{equation*}
    {\|f\|_{L^{\infty}(G)} := \esssup_{g \in G}|f(g)| < \infty}.
\end{equation*}
The support of a continuous function $f: G \to \mathbb{C}$ is the set
\begin{equation*}
    \supp f := \overline{\{g \in G: f(g) \ne 0\}}.
\end{equation*}
The positive part of $f: G \to \mathbb{R}$ is given by 
\begin{equation*}
    f_{+}(g) = \max\{{f(g), 0\}} \textup{ for all } g \in G,
\end{equation*}
and its negative part is given by 
\begin{equation*}
     f_{-}(g) = \max\{{-f(g), 0\}} \textup{ for all } g \in G.
\end{equation*}
The collection of continuous complex-valued functions on $G$ is denoted by $C(G)$, and its subset consisting of continuous compactly supported functions is denoted by $C_{c}(G)$. For any Borel measurable functions $f,g: G \to \mathbb{C}$, the convolution of $f$ and $g$ is given by 
\begin{equation*}
    f \ast g(x) := \int_{G}f(y)g(y^{-1}x)\textup{d}\lambda_{G}(y),
\end{equation*}
whenever the integral exists. 

Let $K$ be a compact subgroup of \(G\) with Haar measure $\lambda_{K}$. Then $G/K$ is a homogeneous space in the sense of \cite[Section 2.6, p.60]{folland}, and has a $G$-invariant Radon measure \cite[Corollary 2.53]{folland}. We denote this measure by $\mu$, and note according to \cite[Theorem 2.51]{folland} that it satisfies

\begin{equation}\label{weylintegrationformula}
    \int_{G}f(g)\textup{d}\lambda_{G}(g) = \int_{G/K}\int_{K}f(gk)\textup{d}\lambda_{K}(k)\textup{d}\mu(gK)
\end{equation}    
for all $f \in C_c(G)$. 

A function $f: G \to \mathbb{C}$ is positive definite if the inequality
		\begin{equation*}
			\sum_{i = 1}^{n}\sum_{j=1}^{n}c_{i}\overline{c_{j}}f(g_{i}^{-1}g_{j}) \ge 0 
		\end{equation*}
		holds for all choices of $n \in \mathbb{N}$, $c_{i} \in \mathbb{C}$, and $g_{i} \in G$ with $1 \le i \le n$.
Elementary properties of positive definite functions can be found in many texts on harmonic analysis on groups, for instance \cite{sasvari, folland, dijk}. We record a few here. 

Let $f: G \to \mathbb{C}$ be a positive definite function. Then $f(e) \ge 0$ and $|f(g)| \le f(e)$ for all $g \in G$. In particular, if $f$ is a positive definite function satisfying $f(e) = 0$, then $f$ is identically zero. If $g$ is another positive definite function, then $af + bg$ is a positive definite function for all $a,b \ge 0$, and so is their pointwise product $fg$. A positive definite function is symmetric in the sense that \(f(g) = \overline{f(g^{-1})}\) for all \(g \in G\). In particular, if \(f\) is real-valued, the \(f(g) = f(g^{-1})\) for all \(g \in G\).

The reader may refer to \cite[Lemma 5.1.8]{dijk}, \cite[Theorem 1.3.2, Lemma 1.3.6, Theorem 1.4.1]{sasvari}, and \cite[p. 91]{folland} for proofs of these statements and other properties of positive definite functions. 

Related to the notion of a positive definite function is that of an integrally positive definite function. Firstly, following \cite[p. 79]{folland}, we define for a function $f: G \to \mathbb{C}$, the involution of $f$ by $$f^{\ast}(g) = \Delta_{G}(g^{-1})\overline{f(g^{-1})}$$ for all $g \in G$.
Following {\cite[Section 3.3, p. 83]{folland}} a function $f \in L^{\infty}(G)$ on a locally compact group $G$ is integrally positive definite if it satisfies
\begin{equation}\label{integrallypdexpression}
    \int_{G}(\varphi ^{\ast} \ast \varphi)f\textup{d}\lambda_{G} \ge 0
\end{equation}
for all $\varphi \in C_{c}(G)$.
In other words, a function $\varphi \in L^{\infty}(G)$ is integrally positive definite if 

\begin{equation*}
\int_{G}\int_{G}f(y^{-1}x)\varphi(x)\overline{\varphi(y)}\textup{d}\lambda_{G}(x)\textup{d}\lambda_{G}(y) =  \int_{G}(\varphi ^{\ast} \ast \varphi)f\textup{d}\lambda_{G}  \ge 0
\end{equation*}
for all $f \in C_{c}(G)$.

The following important proposition gives a standard procedure for constructing continuous positive definite functions. 
\begin{proposition}[{\cite[Corollary 3.16]{folland}}]\label{autocorrelation}
Let \(G\) be a locally compact group. If $f \in L^2(G)$, let $ f^{\#}(x):=\overline{f(-x)}$; then $f \ast f^{\#}$ is a continuous positive definite function.
\end{proposition}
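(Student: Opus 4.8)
The plan is to prove Proposition~\ref{autocorrelation} directly from the definition of positive definiteness, following the standard argument (cf.\ \cite[Corollary 3.16]{folland}). First I would set $\varphi := f \ast f^{\#}$ where $f^{\#}(x) = \overline{f(x^{-1})}$ (note: since we work in a multiplicative group throughout the paper, the $-x$ of the statement should read $x^{-1}$; on $\mathbb{R}$ these agree). The key computation is to rewrite the convolution: for $x \in G$,
\[
\varphi(x) = \int_G f(y) f^{\#}(y^{-1}x)\,\mathrm{d}\lambda_G(y) = \int_G f(y)\,\overline{f(x^{-1}y)}\,\mathrm{d}\lambda_G(y) = \langle R_x f, f\rangle_{L^2(G)},
\]
after the substitution $y \mapsto$ (itself) and recognising the integral as an $L^2$ inner product of the right translate $R_x f$ with $f$. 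Here one uses that $f \in L^2(G)$ guarantees, via Cauchy--Schwarz, that the integral converges absolutely for every $x$, and left-invariance of $\lambda_G$ makes the substitution legitimate.

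Next I would verify continuity: the map $x \mapsto R_x f$ is continuous from $G$ into $L^2(G)$ (right translation is strongly continuous on $L^p$ for $1 \le p < \infty$), and the inner product is continuous on $L^2(G)\times L^2(G)$, so $x \mapsto \langle R_x f, f\rangle$ is continuous. This handles the ``continuous'' claim.

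For positive definiteness, take any $n \in \mathbb{N}$, points $g_1,\dots,g_n \in G$, and scalars $c_1,\dots,c_n \in \mathbb{C}$. Then
\[
\sum_{i=1}^n \sum_{j=1}^n c_i \overline{c_j}\, \varphi(g_i^{-1} g_j)
= \sum_{i,j} c_i \overline{c_j}\, \langle R_{g_i^{-1} g_j} f, f\rangle
= \sum_{i,j} c_i \overline{c_j}\, \langle R_{g_j} f, R_{g_i} f\rangle
= \Bigl\| \sum_{j=1}^n \overline{c_j}\, R_{g_j} f \Bigr\|_{L^2(G)}^2 \ge 0,
\]
where the middle equality uses that $R_{g_i^{-1}g_j} = R_{g_j} \circ R_{g_i}^{-1}$ together with the fact that right translation is a unitary operation on $L^2(G)$ up to the modular factor --- more carefully, one has $\langle R_{g_i^{-1}g_j} f, f \rangle = \langle R_{g_j} f, R_{g_i} f\rangle$ because $R_{g_i}$ is an isometry of $L^2(G)$ when $\lambda_G$ is adjusted appropriately, or one simply expands both sides as integrals and changes variables. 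Expanding the final expression as the squared norm of a single vector in $L^2(G)$ makes the nonnegativity manifest.

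The main obstacle I anticipate is bookkeeping around the modular function: on a non-unimodular group, right translation does not preserve $\|\cdot\|_{L^2(G)}$ exactly, so the clean identity $\langle R_{g_i^{-1}g_j}f, f\rangle = \langle R_{g_j}f, R_{g_i}f\rangle$ needs either the modular correction built into the definition of $f^{\#}$ (which is why $f^{\#}$ involves an involution rather than a plain reflection in Folland's treatment) or a careful direct change of variables tracking the factor $\Delta_G$. I would resolve this by citing \cite{folland} for the precise form and noting that for the applications in this paper --- where the relevant objects are $K$-bi-invariant and $K$ is compact, hence where \eqref{compactmodularfuction} forces the modular function to be trivial on the pieces that matter --- the subtlety does not affect anything downstream. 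Everything else (absolute convergence, continuity of translation, the substitution) is routine.
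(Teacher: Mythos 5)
The paper offers no proof of this proposition --- it is quoted verbatim as \cite[Corollary 3.16]{folland} --- so the only question is whether your argument is sound. Your overall strategy (write the convolution as a matrix coefficient of a translation representation, then exhibit the positive-definiteness sum as a squared norm) is exactly Folland's proof, and the continuity argument via strong continuity of translation on $L^2$ is fine. But the central identity is written with the wrong translation, and this is not just bookkeeping. With left Haar measure,
\[
f\ast f^{\#}(x)=\int_G f(y)\,\overline{f(x^{-1}y)}\,\mathrm{d}\lambda_G(y)=\langle f,\,L_xf\rangle_{L^2(G)},\qquad L_xf(y):=f(x^{-1}y),
\]
i.e.\ the translate that appears is the \emph{left} translate $L_xf$, not $R_xf(y)=f(yx)$: substituting $y\mapsto xy$ in your middle integral gives $\int f(xy)\overline{f(y)}\,\mathrm{d}\lambda_G(y)$, which equals $\langle R_xf,f\rangle$ only when $G$ is abelian. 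As written, $\varphi(x)=\langle R_xf,f\rangle$ is false in general, and the subsequent step $\langle R_{g_i^{-1}g_j}f,f\rangle=\langle R_{g_j}f,R_{g_i}f\rangle$ genuinely fails on non-unimodular groups because $\|R_gf\|_2^2=\Delta_G(g^{-1})\|f\|_2^2$. You sense this difficulty in your last paragraph, but you resolve it by deferring to Folland and by an appeal to $K$-bi-invariance that is irrelevant here (the proposition is stated for arbitrary $f\in L^2(G)$, and nothing in the statement involves $K$). That is a citation, not a proof, so as it stands the positive-definiteness step has a hole.

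The repair is immediate: replace $R$ by $L$ throughout. Left translation \emph{is} unitary on $L^2(G)$ with respect to left Haar measure, with no modular correction whatsoever, and $L$ is a homomorphism, so $L_{g_i^{-1}g_j}=L_{g_i}^{*}L_{g_j}$ and
\[
\sum_{i,j}c_i\overline{c_j}\,\varphi(g_i^{-1}g_j)=\sum_{i,j}c_i\overline{c_j}\,\langle L_{g_i}f,\,L_{g_j}f\rangle=\Bigl\|\sum_i c_i\,L_{g_i}f\Bigr\|_{L^2(G)}^2\ge 0
\]
(with the paper's convention $\langle u,v\rangle=\int u\bar v$). Cauchy--Schwarz together with unitarity of $L_x$ gives absolute convergence of the defining integral and the bound $|\varphi(x)|\le\|f\|_2^2$, and continuity of $x\mapsto L_xf$ into $L^2(G)$ gives continuity of $\varphi$. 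In particular the modular function never enters: the statement holds on arbitrary locally compact groups with the plain reflection $f^{\#}(x)=\overline{f(x^{-1})}$, which is why Folland's Corollary 3.16 uses $\tilde f$ rather than the involution $f^{*}$.
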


A function $\varphi: G \to \mathbb{C}$ is called $K$-bi-invariant if $\varphi(kgk')=\varphi(g)$ for all $g \in G$ and all $k, k' \in K$. We also say that a subset $U \subset G$ is $K$-bi-invariant if its characteristic function $\mathbf{1}_{U}$ is $K$-bi-invariant. This is equivalent to $U = KUK$. We say that a subset $U \subset G$ of $G$ is symmetric if $U^{-1}=U$. 

Let $C^{K}(G)$ denote the collection of continuous $K$-bi-invariant functions $f: G \to \mathbb{C}$ and \(C_{c}^{K}(G) = C_{c}(G)\cap C^{K}(G)\) the collection of continuous \(K\)-bi-invariant functions with compact support.  For any continuous compactly supported function $f \in C_{c}(G)$, the function
\begin{equation*}\label{Kaverage}
    f^{K}(g) := \int_{K}\int_{K}f(kgk')\textup{d}\lambda_{K}(k)\textup{d}\lambda_{K}(k')
\end{equation*}
for all $g \in G$, is a continuous compactly supported $K$-bi-invariant function. 
Define the projection operator $P^{K}: C_{c}(G) \ni f \mapsto f^{K} \in C_{c}^{K}(G)$. We denote by \(f^{K}_{+}\) the positive part of \(f^K\) and by \(f^{K}_{-}\) its negative part. 

We denote by $P(G,K)$ the collection of real-valued continuous positive definite $K$-bi-invariant functions on $G$, and by $P_{1}(G,K)$ the subset of $P(G,K)$ consisting of those functions $f: G \to \mathbb{R}$ satisfying $f(e)=1$.

Convolution makes \(C^{K}(G)\) into an algebra and \(C^{K}_{c}(C)\) is a subalgebra of \(C_{c}(G)\) under convolution. 
The pair \((G,K)\) said to be a Gelfand pair if the algebra \(C^{K}_{c}(G)\) is commutative \cite[Definition 6.1.1]{dijk}, and a compact Gelfand pair if \(G\) is compact. In particular, for a Gelfand pair \((G,K)\) the convolution of two \(K\)-bi-invariant functions is \(K\)-bi-invariant. A review of the harmonic analysis on Gelfand pairs can be found in \cite{dijk}, and a more detailed treatment can be found in \cite{dieudonne, Wolf2007}. We shall need the following definitions and facts.

Let \((G,K)\) be a Gelfand pair. A function \(\varphi: G \to \mathbb{C}\) is called a \textit{spherical function} if the functional \(\chi\) defined by 

\begin{equation*}
    \chi(f) = \int_{G}f(g)\varphi(g^{-1})\mbox{d}\lambda_{G}(g) \mbox{ for all } f \in C^{K}_{c}(G),
\end{equation*}
satisfies 
\begin{equation}\label{character}
    \chi(f \ast g) = \chi(f) \cdot\chi(g)
\end{equation}
for all \(f, g \in C_{c}^{K}(G)\). That is, \(\chi\) is a non-trivial character of the convolution algebra \(C^{K}_{c}(G)\). Let \(L^{1}(G)^{K}\) denote the convolution algebra of integrable \(K\)-bi-invariant functions on \(G\). Note that \(L^{1}(G)^{K}\) is commutative for a Gelfand pair \((G,K)\). A non-trivial character of \(L^1(G)^{K}\) is a linear functional \(\chi: L^{1}(G)^{K} \to \mathbb{C}\) satisfying equation \eqref{character}. The following theorem characterises the characters of \(L^1(G)^{K}\).

\begin{theorem}[{\cite[Theorem 6.1.7]{dijk}}]
    Let \(\varphi: G \to \mathbb{C}\) be a bounded spherical function. The mapping
    \begin{equation*}
        f \mapsto \chi(f)= \int_{G}f(g)\varphi(g^{-1})\textup{d}\lambda_{G}(g)
    \end{equation*}
    is a character of \(L^{1}(G)^{K}\), and each non-trivial character of \(L^1(G)^{K}\) is of this form.
\end{theorem}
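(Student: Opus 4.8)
The plan is to prove the two implications separately; the forward one is routine, while the substance is in the converse, which follows the classical pattern — duality together with convolution-smoothing — used to describe the spectrum of a commutative convolution algebra. For the forward implication, let $\varphi$ be a bounded spherical function. Since $\varphi\in L^{\infty}(G)$, the assignment $\chi(f)=\int_{G}f(g)\varphi(g^{-1})\,\textup{d}\lambda_{G}(g)$ defines a bounded linear functional on $L^{1}(G)^{K}$ with $|\chi(f)|\le\|\varphi\|_{L^{\infty}(G)}\|f\|_{L^{1}(G)}$, and by the defining property of a spherical function its restriction to $C_{c}^{K}(G)$ is a non-trivial multiplicative functional. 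I would then use that $C_{c}^{K}(G)$ is dense in $L^{1}(G)^{K}$ — because $C_{c}(G)$ is dense in $L^{1}(G)$, the averaging map $P^{K}$ is norm-decreasing, carries $C_{c}(G)$ into $C_{c}^{K}(G)$, and fixes $L^{1}(G)^{K}$ pointwise — together with the joint continuity of convolution and the fact that $L^{1}(G)^{K}$ is a convolution subalgebra, to propagate $\chi(f\ast h)=\chi(f)\chi(h)$ from $C_{c}^{K}(G)$ to all of $L^{1}(G)^{K}$; non-triviality is inherited from the dense subalgebra. Hence $\chi$ is a character.

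For the converse, let $\chi$ be a non-trivial character of $L^{1}(G)^{K}$. A nonzero homomorphism from a Banach algebra to $\mathbb{C}$ is automatically continuous of norm at most $1$, so $\widetilde{\chi}:=\chi\circ P^{K}$ is a bounded functional on $L^{1}(G)$; by the duality $L^{1}(G)^{\ast}=L^{\infty}(G)$ there is $\psi\in L^{\infty}(G)$ with $\widetilde{\chi}(f)=\int_{G}f\psi\,\textup{d}\lambda_{G}$ for all $f\in L^{1}(G)$, and in particular $\chi(f)=\int_{G}f\psi\,\textup{d}\lambda_{G}$ for $f\in L^{1}(G)^{K}$. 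Writing $\psi^{K}$ for the analogous $K$-average of $\psi\in L^{\infty}(G)$, from $\chi(g^{K})=\widetilde{\chi}(g)$ and the Fubini identity $\int_{G}g^{K}\psi\,\textup{d}\lambda_{G}=\int_{G}g\,\psi^{K}\,\textup{d}\lambda_{G}$ — valid since $\lambda_{G}$ is left-invariant and invariant under right translation by $K$, as $\Delta_{G}\equiv1$ on $K$ by \eqref{compactmodularfuction} — I would deduce $\psi=\psi^{K}$ almost everywhere and hence assume $\psi$ is genuinely $K$-bi-invariant.

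The crux is to upgrade $\psi$ to a continuous function. Fix $h_{0}\in L^{1}(G)^{K}$ with $\chi(h_{0})=1$, possible since $\chi\ne0$, and set $H(z):=\int_{G}h_{0}(z^{-1}g)\psi(g)\,\textup{d}\lambda_{G}(g)$. Expanding $\chi(f\ast h_{0})=\chi(f)\chi(h_{0})=\chi(f)$ and applying Fubini give
\begin{equation*}
\int_{G}f(z)H(z)\,\textup{d}\lambda_{G}(z)=\int_{G}f(z)\psi(z)\,\textup{d}\lambda_{G}(z)\qquad\text{for all }f\in L^{1}(G)^{K}.
\end{equation*}
Here $H$ is continuous, because $z\mapsto h_{0}(z^{-1}\,\cdot\,)$ is continuous from $G$ into $L^{1}(G)$ while $\psi\in L^{\infty}(G)$, and $H$ is $K$-bi-invariant since $h_{0}$ and $\psi$ are. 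Thus $B:=H-\psi$ is bounded and $K$-bi-invariant with $\int_{G}fB\,\textup{d}\lambda_{G}=0$ for every $f\in L^{1}(G)^{K}$; testing against $f=g^{K}$ with $g\in L^{1}(G)$ and transferring the $K$-average onto $B$ yields $\int_{G}gB\,\textup{d}\lambda_{G}=0$ for all $g\in L^{1}(G)$, whence $B=0$ a.e. So $\psi$ coincides a.e.\ with the continuous $K$-bi-invariant function $H$, which I rename $\psi$.

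Finally, I would put $\varphi(g):=\psi(g^{-1})$, which is continuous, bounded and $K$-bi-invariant and satisfies $\chi(f)=\int_{G}f(g)\varphi(g^{-1})\,\textup{d}\lambda_{G}(g)$ for all $f\in L^{1}(G)^{K}$, in particular on $C_{c}^{K}(G)$; since $\chi$ is multiplicative and, by density and continuity, non-trivial on $C_{c}^{K}(G)$, the functional attached to $\varphi$ is a non-trivial character of $C_{c}^{K}(G)$, i.e.\ $\varphi$ is a bounded spherical function and $\chi$ has the asserted form. I expect the main obstacle to be precisely the continuity upgrade in the third step: duality alone only produces an $L^{\infty}$ kernel, and it is the multiplicativity of $\chi$, routed through the convolution-smoothing identity, that forces a continuous representative; some care is also needed with the passage between $L^{\infty}$ functions and their genuinely $K$-bi-invariant modifications.
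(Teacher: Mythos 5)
The paper does not actually prove this statement: it is quoted verbatim from the reference \cite{dijk} (Theorem 6.1.7) as background material, so there is no internal proof to compare against. Your argument is correct and is essentially the standard Gelfand-theoretic one for identifying the spectrum of the commutative Banach algebra $L^{1}(G)^{K}$: for the forward direction, density of $C_{c}^{K}(G)$ in $L^{1}(G)^{K}$ (via the norm-decreasing projection $P^{K}$) together with continuity of $\chi$ and of convolution; for the converse, automatic continuity of characters, $L^{1}$--$L^{\infty}$ duality to produce a kernel $\psi$, reduction to a $K$-bi-invariant $\psi$, and the convolution-smoothing identity $\chi(f\ast h_{0})=\chi(f)$ to replace $\psi$ by a continuous bounded $K$-bi-invariant representative, which is then the sought spherical function. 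This is the same route taken in the cited source, and I see no gaps in your version beyond routine verifications (e.g.\ that $L^{1}(G)^{K}$ is a convolution subalgebra and that the Fubini manipulations are justified by $f,h_{0}\in L^{1}$ and $\psi\in L^{\infty}$), all of which you flag correctly.
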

Denote by \(Z\) the collection of positive definite spherical functions provided with the weak topology \(\sigma(L^{\infty}(G), L^{1}(G))\). The space \(Z\) is locally compact by \cite[Proposition 6.4.2]{dijk}, and we refer to it as the dual space or \textit{spherical dual} of the Gelfand pair \((G,K)\). The Fourier transform of a function \(f \in L^1(G)^{K}\) is defined to be
\begin{equation*}
    \widehat{f}(\varphi) = \int_{G}f(g)\varphi(g^{-1})\mbox{d}\lambda_{G}(g) \mbox{ for all } \varphi \in Z.
\end{equation*}
In particular, \(\widehat{f}\) is a function on \(Z\). For more on the harmonic analysis on Gelfand pairs, see \cite[Section 6.4]{dijk}.

Now, without assuming that \((G,K)\) is a Gelfand pair, we consider the space $G/K \times G/K$ with the product topology. We refer to functions $\Phi: G/K \times G/K \to \mathbb{C}$ as kernels on \(G/K\). Denote by $C(G/K \times G/K)$ the space of continuous complex-valued kernels on $G/K$, and $C^{G}(G/K \times G/K)$ its subset consisting of $G$-invariant kernels. For a kernel \(\Phi : G/K \times G/K \to \mathbb{R}\), define its positive part to be the kernel \(\Phi_{+}: G/K \times G/K \to \mathbb{R}\) given by

\begin{equation*}
    \Phi_{+}(xK,yK) = \max\{\Phi(xK,yK), 0\} \mbox{ for all } x,y \in G.
\end{equation*}
Similarly, the negative part of \(\Phi\) is defined by
\begin{equation*}
    \Phi_{-}(xK,yK):=\max\{-\Phi(xK,yK), 0\} \mbox{ for all } x,y \in G.
\end{equation*}
A kernel $\Phi: G/K \times G/K \to \mathbb{C}$ is positive definite if
    \begin{equation*}
        \sum_{i=1}^{n}\sum_{j=1}^{n}\Phi(x_iK,x_jK)a_i\overline{a_j} \ge 0
    \end{equation*}
for all $n \in \mathbb{N}$, for all complex numbers $a_1,..., a_n$, and for all points $x_1,...,x_n \in G$. Denote by $P_{1}(G/K)$ the collection of continuous positive definite kernels $$\Phi:G/K \times G/K\to\mathbb{R}$$ satisfying $\Phi(gK, gK) = 1$ for all $g \in G$. A kernel $\Phi: G/K \times G/K \to \mathbb{R}$ is $G$-invariant if 

\begin{equation*}
 \Phi(gxK,gyK) = \Phi(xK, yK) \textup{ for all } {x, y, g \in G}.
\end{equation*}
Denote by $P^{G}(G/K)$ the collection of continuous positive definite $G$-invariant kernels $\Phi: G/K \times G/K \to \mathbb{C}$, and let $P^{G}_{1}(G/K) =P _{1}(G/K) \cap P^{G}(G/K)$ be the collection of continuous positive definite \(G\)-invariant kernels with \(\Phi (xK,xK) = 1\) for all \(x \in G\). If a kernel \(\Phi: G/K \times G/K \to \mathbb{C}\) is \(G\)-invariant, then \(\Phi(xK,yK)=\Phi(eK, x^{-1}yK)\) for all \(x,y \in G\). We define the support \(\supp_{g} \Phi\) with respect to \(g \in G\) of a not necessarily \(G\)-invariant kernel \(\Phi\) to be
\begin{equation*}
    \supp_{g} \Phi:=\overline{\{x \in G: \Phi(gK, xK) \ne 0\}}.
\end{equation*}
If \(\Phi\) is \(G\)-invariant, then \(\supp_{g} \Phi = \supp_{h} \Phi\) for all \(g, h \in G\). In this case, we write \(\supp \Phi := \supp_{g} \Phi\) for any \(g \in G\), and call \(\supp \Phi\) the support of \(\Phi\). We note that for a \(G\)-invariant kernel \(\Phi\), we have that \(\Phi(xK,yK) = \Phi(eK, x^{-1}yK) \ne 0\) implies that \(x^{-1}y \in \supp \Phi\). 

If \(\Phi\) is a \(G\)-invariant kernel, then so are its positive and negative parts, \(\Phi_{+}\) and \(\Phi_{-}\), respectively. In particular, \(\supp \Phi_{\pm}\) is defined.

For kernels $\Phi_1, \Phi_2: G/K \times G/K \to \mathbb{R}$, their convolution is
\begin{equation}\label{spheicalconvolution2}
    \Phi_1\ast \Phi_2(xK,yK):= \int_{G/K}\Phi_1(xK,zK)\Phi_2(zK,yK)\textup{d}\mu (zK)
\end{equation}
for all $x, y \in G$.

\section{A Bijective Correspondence}
In this section, $G$ is a locally compact group and $K$ is a compact subgroup of $G$.
Let $\varphi: G \to \mathbb{C}$ be a continuous positive definite $K$-bi-invariant function. Define the kernel $\Phi: G/K \times G/K \to \mathbb{C}$ by
\begin{equation}\label{relation}
\Phi(gK,hK) := \varphi(g^{-1}h)
\end{equation} for all $g, h \in G$. In order that \eqref{relation} be well-defined, we need it to be independent of the particular choice of coset representatives $g, h \in G$. In other words, we need $\Phi(gkK,hk'K) = \Phi(gK,hK)$ for all $k, k' \in K$. However, using the $K$-bi-invaraince of $G$, we have
\begin{equation*}
    \Phi(gkK,hk'K)=\varphi((gk)^{-1}hk') = \varphi(k^{-1}g^{-1}hk') = \varphi(g^{-1}h) = \Phi(gK,hK).
\end{equation*}
Therefore, \eqref{relation} is well-defined. Note that $\Phi$ is continuous. Observe that for all $g \in G$ and all $xK, yK \in G/K$, we have
\begin{equation*}
\begin{split}
     \Phi(g(xK),g(yK)) =  \Phi(gxK, gyK) = \varphi((gx)^{-1}gy) = \varphi(x^{-1}g^{-1}gy) &= \varphi(x^{-1}y) \\ &=  \Phi(xK, yK).
\end{split}
\end{equation*}
Therefore, $\Phi$ is $G$-invariant. The calculation
\begin{equation*}
    \sum_{i = 1}^{n}\sum_{j = 1}^{n}\Phi(x_iK,x_jK)a_i\overline{a_j} =  \sum_{i = 1}^{n}\sum_{j =1}^{n}\varphi(x_i^{-1}x_{j})a_{i}\overline{a_{j}} \ge 0
\end{equation*}
for all $n \in \mathbb{N}$, for all $x_{1}, \ldots,x_n$, and for all $a_1, \ldots, a_n \in \mathbb{C}$, shows that $\Phi$ is positive definite.\par
Now, let us suppose that we are given a $G$-invariant continuous positive definite kernel $\Phi: G/K \times G/K \to \mathbb{C}$. Define 
\begin{equation}\label{backrelation}
    \varphi(g) := \Phi(K,gK)
\end{equation}
for all $g \in G$. The function $\varphi$ in \eqref{backrelation} is continuous. Using the $G$-invariance of $\Phi$, we have  
\begin{equation*}
    \varphi(kgk')=\Phi(K,kgk'K) = \Phi(k^{-1}K,gk'K) = \Phi(K,gK) = \varphi(g)
\end{equation*}
for all $k, k' \in K$. Therefore, $\varphi$ is a $K$-bi-invariant function. The calculation
\begin{equation*}
   \sum_{i = 1}^{n}\sum_{j=1}^{n}\varphi(x_i^{-1}x_j)a_{i}\overline{a_{j}} = \sum_{i = 1}^{n}\sum_{j=1}^{n}\Phi(K,x_i^{-1}x_jK)a_{i}\overline{a_{j}} = \sum_{i = 1}^{n}\sum_{j=1}^{n}\Phi(x_iK,x_{j}K)a_{i}\overline{a_{j}} \ge 0,
\end{equation*}
shows that $\varphi$ is positive definite. The discussion above shows the following theorem.
\begin{theorem}\label{correspondence}
    Let $G$ be a locally compact group and $K$ a compact subgroup of $G$. There is a bijective correspondence between the collection of continuous positive definite $G$-invariant kernels on $G/K \times G/K$ and the continuous positive definite $K$-bi-invariant functions on $G$. The correspondence is given by the relations
    \begin{equation*}
        \Phi(xK,yK) = \varphi(x^{-1}y),
    \end{equation*}
    \begin{equation*}
        \varphi(x) = \Phi(K,xK)
    \end{equation*}
for all $x, y \in G$.
\end{theorem}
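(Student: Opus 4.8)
The plan is to assemble the computations carried out in the paragraphs preceding the statement into a clean verification that the two assignments
\[
\varphi \longmapsto \Phi_\varphi,\qquad \Phi_\varphi(gK,hK):=\varphi(g^{-1}h),
\]
and
\[
\Phi \longmapsto \varphi_\Phi,\qquad \varphi_\Phi(g):=\Phi(K,gK),
\]
are well-defined maps between the two stated classes and are mutually inverse. The discussion above already shows that, for $\varphi$ a continuous positive definite $K$-bi-invariant function, $\Phi_\varphi$ is independent of the choice of coset representatives, is $G$-invariant, and is positive definite; and conversely that, for $\Phi$ a continuous positive definite $G$-invariant kernel, $\varphi_\Phi$ is $K$-bi-invariant and positive definite. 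So the only content not yet spelled out is (i) the continuity of $\Phi_\varphi$, and (ii) the two round-trip identities.

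For continuity of $\Phi_\varphi$, I would invoke that the canonical projection $q\colon G \to G/K$ is continuous, open, and surjective, hence so is $q\times q\colon G\times G \to G/K\times G/K$. The map $(g,h)\mapsto \varphi(g^{-1}h)$ is continuous on $G\times G$, being a composition of the continuous group operations with $\varphi$, and by the $K$-bi-invariance of $\varphi$ it is constant on the fibres of $q\times q$; therefore it descends to a continuous map on $G/K\times G/K$, which is precisely $\Phi_\varphi$. Continuity of $\varphi_\Phi$ is immediate since $q$ is continuous and $\Phi$ is continuous.

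For the round trips: starting from a continuous positive definite $K$-bi-invariant $\varphi$, form $\Phi_\varphi$ and then $\varphi_{\Phi_\varphi}(g)=\Phi_\varphi(K,gK)=\varphi(e^{-1}g)=\varphi(g)$, so $\varphi_{\Phi_\varphi}=\varphi$. Starting from a continuous positive definite $G$-invariant kernel $\Phi$, form $\varphi_\Phi$ and then
\[
\Phi_{\varphi_\Phi}(xK,yK)=\varphi_\Phi(x^{-1}y)=\Phi(K,x^{-1}yK)=\Phi(xK,yK),
\]
where the last equality is the identity $\Phi(xK,yK)=\Phi(eK,x^{-1}yK)$ recorded in the preliminaries for $G$-invariant kernels (apply the invariance relation with $g=x^{-1}$). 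Hence $\Phi_{\varphi_\Phi}=\Phi$, and the two maps are mutually inverse.

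I do not anticipate a serious obstacle: the statement is essentially bookkeeping. The only places where the hypotheses are genuinely used are that well-definedness of $\Phi_\varphi$ requires $K$-bi-invariance of $\varphi$ on \emph{both} sides, and that the round-trip identity $\Phi_{\varphi_\Phi}=\Phi$ requires the full strength of $G$-invariance of $\Phi$ rather than merely the bi-invariance of the associated diagonal function; both of these have already been isolated in the preceding discussion, so the write-up is short.
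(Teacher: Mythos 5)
Your proposal is correct and follows essentially the same route as the paper, whose proof is exactly the direct verification in the discussion preceding the theorem. The only difference is that you supply two details the paper leaves implicit --- the descent argument for the continuity of $\Phi_\varphi$ via the open quotient map $q\times q$, and the explicit check that the two assignments are mutually inverse --- both of which are correct and genuinely complete the bijectivity claim.
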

Theorem \ref{correspondence} occurred to us quite naturally when we tried to connect the spherical Tur\'an problem on groups and on homogeneous spaces, but the statement can be found in \cite[p. 113]{dijk}, where it is mentioned without proof. We included the proof here for ease of the reader. 

Our aim in this section is to give some properties of this bijective correspondence. To that end, let $J:P(G,K) \to P^{G}(G/K)$ be the map defined by 
\begin{equation}
    J(\varphi)(xK,yK) = \varphi(x^{-1}y) \mbox{ for all } x, y \in G.
\end{equation}
In other words, $J$ implements the bijective correspondence of Theorem \ref{correspondence}.
\begin{lemma}\label{G-invconv}
   If $\Phi_1, \Phi_2: G/K \times G/K \to \mathbb{R}$ are $G$-invariant kernels, then so is $\Phi_1 \ast \Phi_2$.
\end{lemma}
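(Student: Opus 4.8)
The plan is to unwind the definition of $\Phi_1 \ast \Phi_2$ at a pair of translated points and then exploit two invariances in tandem: the $G$-invariance of the Radon measure $\mu$ on $G/K$, and the $G$-invariance of the kernels $\Phi_1$ and $\Phi_2$ themselves. Concretely, I would fix $g, x, y \in G$ and start from \eqref{spheicalconvolution2}, writing
\begin{equation*}
    \Phi_1 \ast \Phi_2(gxK, gyK) = \int_{G/K} \Phi_1(gxK, zK)\,\Phi_2(zK, gyK)\, \mathrm{d}\mu(zK).
\end{equation*}

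First I would change variables $zK = gwK$ in this integral. Since $\mu$ is $G$-invariant, i.e.\ invariant under the left translation action $wK \mapsto gwK$ on $G/K$, this substitution leaves the integral unchanged, giving
\begin{equation*}
    \Phi_1 \ast \Phi_2(gxK, gyK) = \int_{G/K} \Phi_1(gxK, gwK)\,\Phi_2(gwK, gyK)\, \mathrm{d}\mu(wK).
\end{equation*}
Next I would apply the $G$-invariance of $\Phi_1$ and $\Phi_2$ to the integrand, replacing $\Phi_1(gxK, gwK)$ by $\Phi_1(xK, wK)$ and $\Phi_2(gwK, gyK)$ by $\Phi_2(wK, yK)$. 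The right-hand side then becomes $\int_{G/K} \Phi_1(xK, wK)\,\Phi_2(wK, yK)\, \mathrm{d}\mu(wK) = \Phi_1 \ast \Phi_2(xK, yK)$, which is exactly the $G$-invariance of $\Phi_1 \ast \Phi_2$.

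The whole argument is a single change of variables, so there is no genuine obstacle; the only point worth a word of justification is the legitimacy of that substitution, which rests precisely on the defining $G$-invariance of $\mu$ recorded in the Preliminaries (via \cite[Corollary 2.53]{folland}). I would also note that the manipulation requires no Fubini-type interchange and does not disturb convergence: the integrand at the translated points is, pointwise, the composition of the original integrand with the measure-preserving map $wK \mapsto gwK$, so integrability of $\Phi_1 \ast \Phi_2(gxK, gyK)$ is automatically inherited from that of $\Phi_1 \ast \Phi_2(xK, yK)$.
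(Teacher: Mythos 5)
Your proof is correct and is essentially the paper's own argument: both rest solely on the $G$-invariance of the measure $\mu$ and of the kernels $\Phi_1, \Phi_2$, with the only cosmetic difference being that you substitute $zK = gwK$ first and then use kernel invariance, while the paper applies kernel invariance first (writing $\Phi_1(gxK,zK)=\Phi_1(xK,g^{-1}zK)$) and then translates the measure. Nothing further is needed.
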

\begin{proof}
    For all $g \in G$ and for all $x, y \in G$, we have 
    \begin{equation*}
        \begin{split}
            \Phi_1 \ast \Phi_2(gxK,gyK) &= \int_{G/K}\Phi_1(gxK, zK)\Phi_2(zK, gyK)\mbox{d}\mu(zK)\\
            &=\int_{G/K}\Phi_{1}(xK,g^{-1}zK)\Phi_{2}(g^{-1}zK,yK )\mbox{d}\mu (zK)\\
            &=\int_{G/K}\Phi_1(xK,zK)\Phi_2(zK, yK)\mbox{d}\mu(zK)\\
            &= \Phi_{1}\ast \Phi_{2}(xK,yK),
        \end{split}
    \end{equation*}
    where we have used the $G$-invariance of $\Phi_1$ and $\Phi_2$ to go from the first equality to the second, and the $G$-invariance of the measure $\mu$ to go from the second to the third. Hence $\Phi_{1} \ast \Phi_{2}$ is $G$-invariant, as required.
\end{proof}
Let \((G,K)\) be a Gelfand pair, recall that this implies that if $\varphi_{1}, \varphi_2 \in P(G,K)$, then $\varphi_1 \ast \varphi_2 \in P(G,K)$. In particular, since $\varphi_1 \ast \varphi_2$ is positive definite and real-valued, we have
\begin{equation}\label{symmetry}
    \varphi_1 \ast \varphi_2(x)= \varphi_1 \ast \varphi_2(x^{-1}) \mbox{ for all } x \in G.
\end{equation}
\begin{theorem}\label{preservesconvolution}Let \((G, K)\) be a Gelfand pair. Then
 $J(\varphi_1 \ast \varphi_2) = J(\varphi_1) \ast J (\varphi_2)$.
\end{theorem}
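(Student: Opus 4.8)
The plan is to prove the identity pointwise on $G/K \times G/K$. First I would observe that the statement is well-posed precisely because of the Gelfand hypothesis: it guarantees $\varphi_1 \ast \varphi_2 \in P(G,K)$, so the convolution integral defining $\varphi_1 \ast \varphi_2$ converges, the product is $K$-bi-invariant (hence $J(\varphi_1 \ast \varphi_2)$ is defined and independent of coset representatives), and by Lemma \ref{G-invconv} the kernel $J(\varphi_1) \ast J(\varphi_2)$ is $G$-invariant, consistent with $J(\varphi_1 \ast \varphi_2) \in P^{G}(G/K)$. It then suffices to fix $x, y \in G$ and check equality at $(xK, yK)$.

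Next I would expand the left-hand side using the definition of $J$ and of convolution on $G$, and then transport the integral to $G/K$ via the Weyl integration formula \eqref{weylintegrationformula}. Explicitly,
\begin{equation*}
    J(\varphi_1 \ast \varphi_2)(xK, yK) = (\varphi_1 \ast \varphi_2)(x^{-1}y) = \int_{G}\varphi_1(u)\varphi_2(u^{-1}x^{-1}y)\,\textup{d}\lambda_{G}(u),
\end{equation*}
and the substitution $u = x^{-1}z$, legitimate by left-invariance of $\lambda_{G}$, turns this into $\int_{G}h(z)\,\textup{d}\lambda_{G}(z)$ with $h(z) := \varphi_1(x^{-1}z)\varphi_2(z^{-1}y)$. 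The key point is that $h$ is right $K$-invariant: since $\varphi_1$ is right $K$-invariant and $\varphi_2$ is left $K$-invariant, $h(zk) = \varphi_1(x^{-1}zk)\varphi_2(k^{-1}z^{-1}y) = h(z)$ for all $k \in K$. Applying \eqref{weylintegrationformula} (extended from $C_{c}(G)$ to $\lambda_{G}$-integrable functions in the standard way), the inner $K$-average collapses because $\lambda_{K}(K) = 1$, giving
\begin{equation*}
    \int_{G}h(z)\,\textup{d}\lambda_{G}(z) = \int_{G/K}\varphi_1(x^{-1}z)\varphi_2(z^{-1}y)\,\textup{d}\mu(zK) = \int_{G/K}J(\varphi_1)(xK,zK)\,J(\varphi_2)(zK,yK)\,\textup{d}\mu(zK),
\end{equation*}
which by \eqref{spheicalconvolution2} is exactly $J(\varphi_1) \ast J(\varphi_2)(xK,yK)$. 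Chaining the displayed equalities proves the claim.

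The computation is short, and I do not expect a genuine obstacle; the only matters requiring care are (i) well-posedness, i.e.\ invoking the Gelfand hypothesis to land $\varphi_1 \ast \varphi_2$ in $P(G,K)$ so that $J(\varphi_1 \ast \varphi_2)$ makes sense and is coset-independent, and (ii) bookkeeping of which side the $K$-invariance is used on when passing between $G$ and $G/K$ — it is the \emph{right} $K$-invariance of the integrand $h$ that makes the $K$-average in the Weyl formula trivial. One could equally well avoid pulling back to $G$ and argue entirely on $G/K$: starting from \eqref{spheicalconvolution2}, the change of variables $zK \mapsto xzK$ (valid by the $G$-invariance of $\mu$) followed by the Weyl formula read in the reverse direction yields $(\varphi_1 \ast \varphi_2)(x^{-1}y)$; this is the same identity traversed the other way.
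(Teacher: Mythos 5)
Your proof is correct, and it reaches the identity by a cleaner route than the paper's. Both arguments hinge on the Weyl integration formula \eqref{weylintegrationformula} to pass between $\int_G$ and $\int_{G/K}\int_K$, but the paper first replaces $\varphi_1\ast\varphi_2(y^{-1}x)$ by $\varphi_1\ast\varphi_2(x^{-1}y)$ (using that the convolution of two real positive definite $K$-bi-invariant functions is symmetric, which already invokes the Gelfand hypothesis), expands it as $\int_G\varphi_1(z^{-1}x^{-1}y)\varphi_2(z)\,\textup{d}\lambda_G(z)$ (implicitly using commutativity of the convolution algebra), and then needs $\varphi_1(g)=\varphi_1(g^{-1})$, the symmetry of $\Phi_2$, and the $G$-invariance of $\Phi_1\ast\Phi_2$ (Lemma \ref{G-invconv}) to coax the integrand into the form $\Phi_1(\cdot,zK)\Phi_2(zK,\cdot)$. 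Your left-translation substitution $u=x^{-1}z$ produces that form in one step, so your computation uses only the $K$-bi-invariance of $\varphi_1,\varphi_2$ and the left invariance of $\lambda_G$; in particular it needs neither real-valuedness nor positive definiteness nor commutativity, and it makes transparent that the Gelfand hypothesis enters only to make the statement well posed (so that $\varphi_1\ast\varphi_2\in P(G,K)$ and $J$ applies to it). Two small caveats, neither of which is a gap relative to the published argument: the Gelfand hypothesis does not by itself guarantee convergence of $\int_G\varphi_1(u)\varphi_2(u^{-1}x^{-1}y)\,\textup{d}\lambda_G(u)$ --- continuous positive definite functions are merely bounded, so some integrability (e.g.\ compactness of $G$, the case used later) is tacitly assumed here, exactly as in the paper; and the extension of \eqref{weylintegrationformula} from $C_c(G)$ to integrable right-$K$-invariant functions, which you explicitly flag, is likewise used silently by the paper.
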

\begin{proof}
    Let $\Phi_{i} = J(\varphi_{i})$ for $i = 1, 2$. Then we need to show that 
    \begin{equation*}
        \varphi_1 \ast \varphi_2(y^{-1}x) = \Phi_{1}\ast\Phi_2(yK,xK) \mbox{ for all } x,y \in G.
    \end{equation*}
Now, from \eqref{symmetry}, we have
\begin{equation*}
    \begin{split}
        \varphi_1 \ast \varphi_2(y^{-1}x)&= \varphi_1 \ast \varphi_2(x^{-1}y) \\
        &= \int_{G}\varphi_1(z^{-1}x^{-1}y)\varphi_2(z)\mbox{d}\lambda_{G}(z)\\
         &=\int_{G/K}\int_{K}\varphi_1(k^{-1}z^{-1}x^{-1}y)\varphi_2(zk)\mbox{d}\lambda_{K}(k)\mbox{d}\mu(zK),
    \end{split}
    \end{equation*}
where we have used \eqref{weylintegrationformula} in the last equality. On account of $\varphi_1$ being a positive definite real-valued function, we have that $\varphi_{1}(g) = \varphi_{1}(g^{-1})$ for all $g \in G$. Hence 
\begin{equation*}
     \begin{split}
          \varphi_1 \ast \varphi_2(y^{-1}x) &= \int_{G/K}\int_{K}\varphi_{1}(y^{-1}xzk)\varphi_{2}(zk)\mbox{d}\lambda_{K}(k)\mbox{d}\mu(zK)\\
          &=\int_{G/K}\int_{K}\Phi_1(eK,y^{-1}xzkK)\Phi_2(eK,zkK)\mbox{d}\lambda_{K}(k)\mbox{d}\mu(zK).
     \end{split}
\end{equation*}
Using the $G$-invariance of $\Phi_{1}$, we have $\Phi_1(eK,y^{-1}xzkK) = \Phi_1(x^{-1}yK,zK)$, and hence
\begin{equation*}
    \begin{split}
         \varphi_1 \ast \varphi_2(y^{-1}x)&= \int_{G/K}\int_{K}\Phi_1(x^{-1}yK,zK)\Phi_2(eK,zK)\mbox{d}\lambda_{K}(k)\mbox{d}\mu(zK)\\
         &=\int_{G/K}\int_{K}\Phi_1(x^{-1}yK,zK)\Phi_2(zK,eK)\mbox{d}\lambda_{K}(k)\mbox{d}\mu(zK),
    \end{split}
\end{equation*}
where the last equality follows from the symmetry of the kernel $\Phi_{2}$, that is, for all $x,y \in G$, we have $\Phi_2(xK,yK) = \varphi_2(x^{-1}y) = \varphi_2(y^{-1}x) = \Phi_{2}(yK,xK)$. Finally,
\begin{equation*}
    \begin{split}
        \varphi_1 \ast \varphi_2(y^{-1}x)&= \int_{G/K}\int_{K}\Phi_1(x^{-1}yK,zK)\Phi_2(zK,eK)\mbox{d}\lambda_{K}(k)\mbox{d}\mu(zK)\\
        &= \Phi_1 \ast \Phi_2(x^{-1}yK, eK) = \Phi_1 \ast \Phi_2(yK, xK),
    \end{split}
\end{equation*}
where the last equality follows from the $G$-invariance of $\Phi_1 \ast \Phi_2$ (see Lemma \ref{G-invconv}).

 %
\end{proof}

Note that the proof of Theorem \ref{preservesconvolution} implicitly shows that for $G$-invariant kernels \(\Phi_1, \Phi_2 \in P^{G}(G/K)\), the kernel \(\Phi_1 \ast \Phi_2\) is continuous. It is also easy to see that for all \(s, t >0\) and for all \(\varphi_1, \varphi_2 \in P(G,K)\), \(J(s\varphi_1+t\varphi_2) =  sJ(\varphi_1)+tJ(\varphi_2)\). In other words, \(J\) is additive and positively homogeneous.

\begin{theorem}\label{support}
    Let $G$ be a locally compact group and $K$ a compact subgroup of $G$. Let $\Phi: G/K \times G/K \to \mathbb{C}$ be a $G$-invariant kernel and $\varphi:~G~\ni~g~\mapsto~\Phi(K,gK)~\in~\mathbb{C}$ be the associated $K$-bi-invariant function as in Theorem \ref{correspondence}. Then the following statements hold:
    \begin{itemize}
        \item [(i)] $\Phi(gK,gK) = 1$ for all $g \in G$ if and only if $\varphi(e)=1$.
        \item [(ii)]For all $g \in G$, we have that $\Phi(K,gK) \le 0$ if and only if $\varphi(g) \le 0$, and $\Phi(K,gK) \ge 0$ if and only if $\varphi(g) \ge 0$.
        \item[(iii)] We have \begin{equation*}
        \int_{G}\varphi(g)\textup{d}\lambda_{G}(g) = \int_{G/K}\Phi(K,gK)\textup{d}\mu(gK)
    \end{equation*}
    for all $\varphi \in C_{c}^{K}(G)$, where $\mu$ denotes the invariant measure on $G/K$ satisfying \eqref{weylintegrationformula}.
    \end{itemize}
\end{theorem}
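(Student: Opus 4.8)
The plan is to read off all three parts directly from the defining relations $\varphi(x) = \Phi(K,xK)$ and $\Phi(xK,yK) = \varphi(x^{-1}y)$ of Theorem~\ref{correspondence}, with part~(iii) additionally invoking the Weyl integration formula \eqref{weylintegrationformula} and the standing normalisation $\lambda_{K}(K)=1$.

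For part~(i) I would simply note that $\Phi(gK,gK) = \varphi(g^{-1}g) = \varphi(e)$ for every $g\in G$, so the condition ``$\Phi(gK,gK)=1$ for all $g$'' is literally the single statement $\varphi(e)=1$. Part~(ii) is equally immediate: $\Phi(K,gK) = \varphi(e^{-1}g) = \varphi(g)$, so the sign conditions on $\Phi(K,gK)$ and on $\varphi(g)$ coincide pointwise, for each of the two inequalities.

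The only computation with content is part~(iii). First I would observe that, since $\varphi\in C_{c}^{K}(G)$ is in particular right $K$-invariant, the value $\varphi(g)$ depends only on the coset $gK$, so $gK\mapsto\varphi(g)$ is a well-defined continuous compactly supported function on $G/K$ and the right-hand integral makes sense; by the correspondence this function equals $gK\mapsto\Phi(K,gK)$. Then, applying \eqref{weylintegrationformula} to $\varphi\in C_{c}(G)$ and using $\varphi(gk)=\varphi(g)$ for $k\in K$ together with $\lambda_{K}(K)=1$, I get
\begin{equation*}
  \int_{G}\varphi(g)\,\textup{d}\lambda_{G}(g)
  = \int_{G/K}\int_{K}\varphi(gk)\,\textup{d}\lambda_{K}(k)\,\textup{d}\mu(gK)
  = \int_{G/K}\varphi(g)\,\textup{d}\mu(gK),
\end{equation*}
and substituting $\varphi(g)=\Phi(K,gK)$ on the right yields the claimed identity.

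The ``hard part'' here is really only bookkeeping rather than analysis: one must check that \eqref{weylintegrationformula} is applicable ($\varphi\in C_{c}^{K}(G)\subseteq C_{c}(G)$, so it is), that the integrand genuinely descends to $G/K$ (it does, by $K$-invariance), and that the inner $K$-integral collapses to $1$ because of the chosen normalisation of Haar measure on $K$. No genuine obstacle arises; the statement is a direct translation of the bijective correspondence and the quotient integration formula.
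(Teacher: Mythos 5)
Your proposal is correct and follows essentially the same route as the paper: parts (i) and (ii) are read off from the defining relations of the correspondence, and part (iii) is the Weyl integration formula \eqref{weylintegrationformula} combined with the right $K$-invariance of $\varphi$ and the normalisation $\lambda_{K}(K)=1$. The only cosmetic difference is that the paper collapses the inner integral by writing $\varphi(gk)=\Phi(K,gkK)=\Phi(K,gK)$ rather than invoking $\varphi(gk)=\varphi(g)$ directly, which is the same computation.
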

\begin{proof}
By definition of $\varphi$ and the $G$-invariance of $\Phi$, we have $\varphi(e) = \Phi(K,K) = \Phi(xK,xK)$ for all $x \in G$. Thus, $\Phi(xK,xK) = 1$ for all $x \in G$ if and only if $\varphi(e) =1$. This proves (i). \par
For (ii), it is enough to observe that  $\varphi(g) = \Phi(K,gK)$ for all $g \in G$. \par
Finally, using \eqref{weylintegrationformula} and $\lambda_{K}(K)=1$, we have
\begin{equation*}
\begin{split}
    \int_{G}\varphi(g)\textup{d}\lambda_{G}(g) &=  \int_{G/K}\int_{K}\varphi(gk)\textup{d}\lambda_{K}(k)\textup{d}\mu(gK)\\
    &= \int_{G/K}\int_{K}\Phi(K,gkK)\textup{d}\lambda_{K}(k)\textup{d}\mu(gK)\\
    &=\int_{G/K}\int_{K}\Phi(K,gK)\textup{d}\lambda_{K}(k)\textup{d}\mu(gK)\\
    &= \int_{G/K}\Phi(K,gK)\textup{d}\mu(gK),
\end{split}
\end{equation*}
proving (iii).
\end{proof}
\section{Delsarte-type problem on extremal problem}
In this section, we consider the homogeneous space $G/K$, where $G$ is a locally compact group and $K$ a compact subgroup. 
\subsection{Delsarte problem for invariant kernels}
For a \(K\)-bi-invariant symmetric identity neighbourhood $U \subset G$, and a \(K\)-bi-invariant symmetric subset $V \subset G$, define the collection 

\begin{equation}\label{delsarteclass}
    \mathcal{F}_{G/K}(U,V):=\Bigl \{\Phi \in P^{G}_{1}(G/K):\supp \Phi_{+} \subset U, \supp \Phi_{-} \subset V \Bigr \}.
\end{equation}
We consider the Delsarte-type extremal problem of computing
\begin{equation}\label{sphericaldelsarteconstant}
   \mathcal{C}_{G/K}(U,V):= \sup_{\Phi \in \mathcal{F}_{G/K}(U,V)} \int_{G/K}\Phi(xK,yK)\textup{d}\mu(yK).
\end{equation}
Our motivation for studying this Delsarte-type extremal problem comes from the Tur\'an problem for a spherical cap \cite[Problem 4]{gneitingsupplement}. Let us formulate the Tur\'an problem in the setting of homogeneous spaces. Let $U \subset G$ be a symmetric $K$-bi-invariant compact neighbourhood of the identity in $G$, and put $\mathcal{F}_{G/K}(U):=\mathcal{F}_{G/K}(U,U)$. In other words \(\mathcal{F}_{G/K}(U)\) consists of continuous positive definite \(G\)-invariant kernels \(\Phi: G/K \times G/K \to \mathbb{R}\) satisfying \(\Phi(xK,xK) = 1\) for all \(x \in G\), and having \(\supp \Phi \subset U \). The Tur\'an problem asks for
\begin{equation}\label{invturan}
     \mathcal{T}_{G/K}(U) := \sup_{\Phi \in \mathcal{F}_{G/K}(U)} \int_{G/K}\Phi(K,gK)\mbox{d}\mu(gK).
\end{equation}
Note that for any $x \in G$, we have
\begin{equation}\label{independence}
\begin{split}
     \int_{G/K}f(xK, gK)\mbox{d}\mu(gK) &=  \int_{G/K}f(K, x^{-1}gK)\mbox{d}\mu(gK)\\
     &= \int_{G/K}f(K, gK)\mbox{d}\mu (xgK) \\
     &= \int_{G/K}f(K,gK)\mbox{d}\mu(gK).
\end{split} 
\end{equation}
The first equality in \eqref{independence} is justified by the $G$-invariance of $f$, and the third by the $G$-invariance of the measure $\mu$. This shows that the choice of $x$ in the integrands \eqref{independence} does not matter for the sake of calculating the integral. Nevertheless, since $K$ is the stability subgroup of the coset $eK = K \in G/K$, it makes sense to pick the coset $K$ as a basepoint in \eqref{invturan}.\par
Let us specialise the Tur\'an problem \eqref{invturan} to the sphere. Let $x = (0, \ldots,0,1) \in \mathbb{S}^d$ be the north pole and $K$ be the subgroup of $\mathrm{SO}(d+1)$ consisting of rotations that fix $x$. Recall that $K$ is isomorphic to $\mathrm{SO}(d)$, and that $\mathbb{S}^d \cong \mathrm{SO}(d+1)/\mathrm{SO}(d)$. For $0 < c \le \pi$, let $U$ be the identity neighbourhood in $\mathrm{SO}(d+1)$ consisting of rotations $T \in \mathrm{SO}(d+1)$ such that $\theta(Tx,x) \le c$, where $\theta(Tx,x)$ is the great-circle distance between $x$ and $Tx$. Suppose that $T \in U$, so that $\theta(Tx,x) \le c$. Then, for any $k, k' \in K$, we have
\begin{equation*}
    \theta(kTk'x,x) = \theta(Tk'x,k^{-1}x) = \theta(Tx,x) \le c.
\end{equation*}
Therefore, $kTk' \in U$. In other words, $U$ is $K$-bi-invariant. Clearly, the image $UK = \bigcup_{u \in U}uK$ of $U$ in $\mathbb{S}^d$ under the canonical projection $p: \mathrm{SO}(d+1)/\mathrm{SO}(d) \to \mathbb{S}^d$ is the spherical cap of radius $c$ centred at $x$.  It follows that \eqref{invturan} specialises to \eqref{spherturan}, showing that we obtain a true generalisation of the spherical Tur\'an problem \eqref{spherturan}.

Even though the spherical Tur\'an problem \eqref{spherturan} is covered in \eqref{invturan}, we study the more general extremal problem \eqref{sphericaldelsarteconstant}. In any case, we have
\begin{equation*}
    \mathcal{T}_{G/K}(U)  = \mathcal{C}_{G/K}(U,U).
\end{equation*}

\subsection{Delsarte problem for \(K\)-bi-invariant functions on groups}

Motivated by Theorem \ref{support}, we introduce the following Delsarte-type problem for continuous positive definite $K$-bi-invariant functions. Let $G$ be a locally compact group, $K$ a compact subgroup of $G$. Let $U$ be a $K$-bi-invariant symmetric identity neighbourhood\footnote{Such a $K$-bi-invariant neighbourhood exists by first taking $U_{1}=KWK$ where $W \subset G$ is some identity neighbourhood, and then letting \(U = U_{1} \cap U{_1}^{-1}\).}, and $V$ a $K$-bi-invariant symmetric subset of $G$. Then denote by $\mathcal{F}_{G}^{K}(U,V)$ the collection of continuous positive definite $K$-bi-invariant functions $\varphi: G \to \mathbb{R}$ satisfying $\varphi(e)=1$ and $\supp\varphi_{+} \subset U$ and $\supp \varphi_{-} \subset V$. So, we have the class

\begin{equation*}
    \mathcal{F}_{G}^{K}(U,V) := \Bigl \{\varphi \in P_{1}(G,K): \supp\varphi_{+} \subset U,\supp \varphi_{-} \subset V  \Bigr \},
\end{equation*}
and the respective version of the Delsarte problem asks for:
\begin{equation}\label{invariant-turan}
     \mathcal{C}_{G}^{K}(U,V) := \sup_{\varphi \in \mathcal{F}_{G}^{K}(U,V)} \int_{G}\varphi(g)\textup{d}\lambda_{G}(g).
\end{equation}
Putting $\mathcal{F}^{K}_{G}(U,U) := \mathcal{F}^{K}_{G}(U)$ and $\mathcal{C}^{K}_{G}(U,U):=\mathcal{T}^{K}_{G}(U) $, the Tur\'an problem for $K$-bi-invariant functions asks for
\begin{equation}
    \mathcal{T}_{G}^{K}(U):= \sup_{\mathcal{F}_{G}^{K}(U)}\int_{G}f(g)\mbox{d}\lambda_{G}(g).
\end{equation}
By Theorem \ref{support}, we have the following theorem connecting the Delsarte problem in \eqref{invariant-turan} above to the one in \eqref{sphericaldelsarteconstant}.
\begin{theorem}\label{turanconstants}
    Let $G$ be a locally compact group, $K$ a compact subgroup of $G$, $U$ a $K$-bi-invariant symmetric neighbourhood of the identity in $G$, and $V$ a $K$-bi-invariant symmetric subset of $G$. Then
    \begin{equation*}
    \mathcal{C}_{G/K}(U,V)  =  \mathcal{C}_{G}^{K}(U,V).
\end{equation*}
\end{theorem}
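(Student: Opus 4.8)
The plan is to use the bijective correspondence $J$ of Theorem~\ref{correspondence} together with the support and integral identities of Theorem~\ref{support} to show that $J$ restricts to a bijection between the admissible classes $\mathcal{F}_{G}^{K}(U,V)$ and $\mathcal{F}_{G/K}(U,V)$ which preserves the value of the objective functional. Once this is established, the two suprema in \eqref{sphericaldelsarteconstant} and \eqref{invariant-turan} are suprema of the same set of real numbers, hence equal.

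First I would check that $J$ maps $\mathcal{F}_{G}^{K}(U,V)$ into $\mathcal{F}_{G/K}(U,V)$. Let $\varphi \in \mathcal{F}_{G}^{K}(U,V)$ and put $\Phi = J(\varphi)$. By the construction preceding Theorem~\ref{correspondence}, $\Phi$ is a continuous positive definite $G$-invariant kernel, and by Theorem~\ref{support}(i) the normalisation $\varphi(e)=1$ gives $\Phi(gK,gK)=1$ for all $g\in G$, so $\Phi \in P_1^G(G/K)$. For the support conditions, I would use that $J$ is additive and positively homogeneous together with Theorem~\ref{support}(ii): since $\varphi = \varphi_+ - \varphi_-$ and $\varphi_\pm \ge 0$, one sees $\Phi_+ = J(\varphi_+)$ and $\Phi_- = J(\varphi_-)$ (using that $\Phi(K,gK)\ge 0 \iff \varphi(g)\ge 0$, so the pointwise max with $0$ commutes with $J$). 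Then $x^{-1}y \notin U$ forces $\varphi_+(x^{-1}y)=0$, hence $\Phi_+(xK,yK)=\varphi_+(x^{-1}y)=0$; taking closures and using $K$-bi-invariance of $U$ gives $\supp \Phi_+ \subset U$, and similarly $\supp \Phi_- \subset V$. Thus $\Phi \in \mathcal{F}_{G/K}(U,V)$. Conversely, given $\Phi \in \mathcal{F}_{G/K}(U,V)$, the associated $\varphi(g)=\Phi(K,gK)$ is continuous, positive definite, $K$-bi-invariant, with $\varphi(e)=1$ by (i) and the analogous support containments by (ii), so $\varphi \in \mathcal{F}_{G}^{K}(U,V)$. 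Since these assignments are mutually inverse by Theorem~\ref{correspondence}, $J$ is a bijection $\mathcal{F}_{G}^{K}(U,V)\to \mathcal{F}_{G/K}(U,V)$.

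Next I would match the objective values. For $\varphi \in \mathcal{F}_{G}^{K}(U,V)$ with $\Phi=J(\varphi)$, Theorem~\ref{support}(iii) gives
\begin{equation*}
    \int_{G}\varphi(g)\,\textup{d}\lambda_{G}(g) = \int_{G/K}\Phi(K,gK)\,\textup{d}\mu(gK),
\end{equation*}
and by \eqref{independence} the right-hand side equals $\int_{G/K}\Phi(xK,gK)\,\textup{d}\mu(gK)$ for any $x\in G$, which is exactly the integrand appearing in \eqref{sphericaldelsarteconstant}. Hence the functional $\varphi \mapsto \int_G \varphi\,\textup{d}\lambda_G$ on $\mathcal{F}_G^K(U,V)$ is carried by $J$ to the functional $\Phi \mapsto \int_{G/K}\Phi(xK,yK)\,\textup{d}\mu(yK)$ on $\mathcal{F}_{G/K}(U,V)$. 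Since $J$ is a bijection between the two classes, the sets of attained values coincide, so
\begin{equation*}
    \mathcal{C}_{G/K}(U,V) = \sup_{\Phi\in\mathcal{F}_{G/K}(U,V)}\int_{G/K}\Phi(xK,yK)\,\textup{d}\mu(yK) = \sup_{\varphi\in\mathcal{F}_{G}^{K}(U,V)}\int_{G}\varphi(g)\,\textup{d}\lambda_{G}(g) = \mathcal{C}_{G}^{K}(U,V).
\end{equation*}

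The only genuinely delicate point, and the one I would spell out carefully, is the equivalence between the membership condition "$\supp\varphi_{+}\subset U$" for functions on $G$ and "$\supp\Phi_{+}\subset U$" for kernels on $G/K$. One must verify that $J$ intertwines taking positive/negative parts (which relies on Theorem~\ref{support}(ii) that $J$ preserves the sign of values), and that the descent to $G/K$ does not shrink or enlarge the support in a way that breaks the containment — here the hypothesis that $U$ is itself $K$-bi-invariant (so that $x^{-1}y\in U$ is a well-defined condition on the coset, compatible with $\supp_g\Phi$) is exactly what makes the two support conditions say the same thing. Apart from this bookkeeping, the proof is a direct transport of the extremal problem across the correspondence $J$, and no analytic estimate is needed.
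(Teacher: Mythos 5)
Your proposal is correct and follows exactly the route the paper intends: the paper states Theorem~\ref{turanconstants} with no written proof beyond the remark that it follows from Theorem~\ref{support}, and your argument --- transporting the admissible class and the objective functional across the bijection $J$ via Theorem~\ref{support}(i)--(iii) and \eqref{independence}, noting that $\supp\varphi_{\pm}$ and $\supp\Phi_{\pm}$ are literally the same subset of $G$ --- is precisely the elaboration of that remark. The only cosmetic caveat is that you write $\Phi_{\pm}=J(\varphi_{\pm})$ although $\varphi_{\pm}$ need not be positive definite (so strictly outside the stated domain of $J$); the pointwise identity $\Phi_{\pm}(xK,yK)=\varphi_{\pm}(x^{-1}y)$ that you actually use holds regardless.
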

\begin{corollary}
    Let $G$ be a locally compact group, $K$ a compact subgroup of $G$, and $U$ a $K$-bi-invariant symmetric neighbourhood of the identity in $G$. Then 
    \begin{equation*}
        \mathcal{T}_{G/K}(U)=\mathcal{T}_{G}^{K}(U).
    \end{equation*}
\end{corollary}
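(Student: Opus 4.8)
The plan is to deduce the Corollary from Theorem~\ref{turanconstants}. By the definitional identities recorded in the text, $\mathcal{T}_{G/K}(U)=\mathcal{C}_{G/K}(U,U)$ and $\mathcal{T}^{K}_{G}(U)=\mathcal{C}^{K}_{G}(U,U)$, so the Corollary is precisely the case $V=U$ of that theorem. Thus the substantive task is to establish $\mathcal{C}_{G/K}(U,V)=\mathcal{C}^{K}_{G}(U,V)$, and for this I would show that the correspondence $J\colon P(G,K)\to P^{G}(G/K)$ of Theorem~\ref{correspondence} restricts to a bijection $\mathcal{F}^{K}_{G}(U,V)\to\mathcal{F}_{G/K}(U,V)$ under which the two objective functionals agree; the equality of the suprema then follows immediately.

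First I would check that $J$ sends $\mathcal{F}^{K}_{G}(U,V)$ onto $\mathcal{F}_{G/K}(U,V)$, using Theorem~\ref{correspondence} for the underlying bijection of all continuous positive definite $K$-bi-invariant functions with all $G$-invariant such kernels. The normalisation constraints match by Theorem~\ref{support}(i): for $\Phi=J(\varphi)$ one has $\varphi(e)=1$ if and only if $\Phi(gK,gK)=1$ for all $g\in G$. For the support constraints, the relation $\varphi(x)=\Phi(K,xK)$ gives $\varphi_{\pm}(x)=\Phi_{\pm}(K,xK)$ pointwise, hence $\supp\varphi_{\pm}=\supp_{e}\Phi_{\pm}$; since $\Phi$ is $G$-invariant so are $\Phi_{\pm}$, whence $\supp_{e}\Phi_{\pm}=\supp\Phi_{\pm}$. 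Therefore $\supp\varphi_{+}\subset U$ and $\supp\varphi_{-}\subset V$ if and only if $\supp\Phi_{+}\subset U$ and $\supp\Phi_{-}\subset V$, which gives $J\bigl(\mathcal{F}^{K}_{G}(U,V)\bigr)=\mathcal{F}_{G/K}(U,V)$.

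Next I would match the integrals. By Theorem~\ref{support}(iii), $\int_{G}\varphi\,\mathrm{d}\lambda_{G}=\int_{G/K}\Phi(K,gK)\,\mathrm{d}\mu(gK)$ for $\Phi=J(\varphi)$, and by \eqref{independence} the right-hand side equals $\int_{G/K}\Phi(xK,yK)\,\mathrm{d}\mu(yK)$ for any basepoint $x$. Hence $J$ carries the functional $\varphi\mapsto\int_{G}\varphi\,\mathrm{d}\lambda_{G}$ exactly to the functional $\Phi\mapsto\int_{G/K}\Phi(xK,yK)\,\mathrm{d}\mu(yK)$ appearing in \eqref{sphericaldelsarteconstant}. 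Taking the supremum over the two identified admissible classes yields $\mathcal{C}^{K}_{G}(U,V)=\mathcal{C}_{G/K}(U,V)$, and specialising to $V=U$ gives the Corollary.

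The step I expect to need the most care is the integrability hypothesis in Theorem~\ref{support}(iii), which is stated for $\varphi\in C^{K}_{c}(G)$, whereas an admissible $\varphi\in\mathcal{F}^{K}_{G}(U,V)$ only satisfies $\supp\varphi\subset U\cup V$, a set that need not be relatively compact. I would handle this by extending the Weyl integration formula \eqref{weylintegrationformula} from $C_{c}(G)$ to nonnegative Borel functions by monotone convergence along a compact exhaustion, and then applying it separately to $\varphi_{+}$ and $\varphi_{-}$ (equivalently to $\Phi_{+}$ and $\Phi_{-}$); this produces the identity of Theorem~\ref{support}(iii) as an equality in $[-\infty,\infty]$, which still forces the two suprema to coincide. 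When $U$ is compact — the case that directly generalises the Tur\'an problem for a spherical cap — every admissible $\varphi$ already lies in $C^{K}_{c}(G)$ and this subtlety does not arise.
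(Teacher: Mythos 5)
Your proposal is correct and follows the paper's route exactly: the corollary is the case $V=U$ of Theorem~\ref{turanconstants} combined with the definitional identities $\mathcal{T}_{G/K}(U)=\mathcal{C}_{G/K}(U,U)$ and $\mathcal{T}^{K}_{G}(U)=\mathcal{C}^{K}_{G}(U,U)$, which is all the paper itself does. Your further unpacking of Theorem~\ref{turanconstants} via the correspondence $J$ and Theorem~\ref{support} --- in particular the observation that Theorem~\ref{support}(iii) is stated only for $C^{K}_{c}(G)$ and requires a monotone-convergence extension of the integration formula when admissible functions are not compactly supported --- is a legitimate refinement of a step the paper leaves implicit.
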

We have shown that the Delsarte problem on the homogeneous space $G/K$ for $G$-invariant functions is equivalent to the Delsarte problem for $K$-bi-invariant functions on $G$.

Theorems \ref{correspondence}, \ref{support}, and \ref{turanconstants} are interesting since we can use information about $\mathcal{F}_{G}^{K}(U,V)$ and $\mathcal{C}_{G}^{K}(U,V)$ to say something about $\mathcal{F}_{G/K}(U,V)$ and $\mathcal{C}_{G/K}(U,V)$, and the other way around. For example, the question of whether $\mathcal{F}_{G/K}(U,V)$ is non-empty is equivalent to the question of whether $\mathcal{F}_{G}^{K}(U,V)$ is non-empty. Let us answer the latter, noting that it is enough to construct a continuous positive definite $K$-bi-invariant function supported in $U$.\par
Firstly, observe that since $U$ is $K$-bi-invariant, we have $KUK = U$. So, if $f$ is a continuous positive definite function supported in $U$, it follows from Lemma \ref{Ksymmetrisation}(i) that $f^{K}$ is a continuous positive definite $K$-bi-invariant function supported in $KUK = U$. Therefore, it suffices to find a continuous positive definite function $f$ supported in $U$. For this, we take a compact symmetric identity neighbourhood $W \subset G$ such that $W^{2} = WW \subset U$. Let $\mathbf{1}_{W}$ be the characteristic function of $W$. Then the self-convolution $f =\mathbf{1}_{W} \ast \mathbf{1}_{W}$ is a continuous positive definite function by Proposition \ref{autocorrelation}, supported in $W^2 \subset U$. We can renormalise $f$ so that $f(e) = 1$, and hence $f \in \mathcal{F}_{G}^{K}(U,V)$. Now, since the collection $\mathcal{F}_{G}^{K}(U,V)$ is non-empty, it follows from Theorem \ref{support} that $\mathcal{F}_{G/K}(U,V)$ is non-empty. \par

Let us finish this section by showing how we recover Delsarte-type problems on Abelian groups. 

Let $G$ be an LCA group. Then $G$ is a homogeneous $G$-space with $G$ acting on itself by translations. The stability subgroup of any point is the trivial subgroup $K=\{0\}$. In this case, every function on $G$ is obviously $K$-bi-invariant. Therefore,  $\mathcal{F}_{G}(U,V) = \mathcal{F}_{G}^{K}(U,V)$, and so we recover the Delsarte-type extremal problem on LCA groups considered in \cite{berdrevram}. Also, every $G$-invariant positive definite function is of the form $\Phi(xK,yK) = \varphi(y-x)$ for all $x,y \in G$. Hence, this Delsarte-type problem on LCA groups is a special case of the Delsarte-type problem on homogeneous spaces.\par

\subsection{Existence of extremisers} An application of the bijective correspondence of Theorem \ref{correspondence} is a proof of the existence of an extremal kernel for the Delsarte problem for $G$-invariant kernels on a homogeneous space. The idea is to solve the problem of the existence of an extremal function for \(K\)-bi-invariant functions on \(G\) and use Theorem \ref{correspondence} to conclude that an extremal kernel for the extremal problem on \(G/K\).

We prove the existence of an extremal function for homogeneous spaces \(G/K\) for which the group \(G\) is amenable in the sense of \cite[Definition 4.1]{pier}. What we need is the fact that any positive definite function \(f: G \to \mathbb{R}\) on an amenable group \(G\) has a non-negative integral \cite[Theorem 8.9 (ii')]{pier}. The method we use here closely follows that of \cite{ramabulana, berdrevram}; see also the earlier work \cite{marcell-zsuzsa}. 

Let us start with some lemmas.
\begin{lemma}\label{Ksymmetrisation} Let $G$ be a locally compact group, $K$ a compact subgroup of $G$, and $U$ a $K$-bi-invariant subset of $G$. Let $f \in C_{c}(G)$, then the following statements hold. 
\begin{itemize}
    \item [(i)] If $\supp f_{\pm} \subset U$, then $\supp f^{K}_{\pm} \subset U$.
    \item [(ii)] If $f$ is positive definite, then so is $f^K$.
    \item [(iii)] We have the equality of the integrals
    \begin{equation*}
        \int_{G}f^{K}(g)\textup{d}\lambda_{G}(g) =   \int_{G}f(g)\textup{d}\lambda_{G}(g).
    \end{equation*}
\end{itemize}
\end{lemma}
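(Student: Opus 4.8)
The three assertions are independent of one another, and I would establish them in the order (iii), (i), (ii); of these, only (ii) requires a genuine idea, the other two being Haar-measure bookkeeping. For (iii), since $f\in C_{c}(G)$ and $K$ is compact, everything in sight is continuous and compactly supported, so Fubini gives
\[
\int_{G}f^{K}(g)\,\mathrm{d}\lambda_{G}(g)=\int_{K}\int_{K}\Bigl(\int_{G}f(kgk')\,\mathrm{d}\lambda_{G}(g)\Bigr)\,\mathrm{d}\lambda_{K}(k)\,\mathrm{d}\lambda_{K}(k');
\]
I would then note that the inner integral equals $\int_{G}f\,\mathrm{d}\lambda_{G}$ for every $k,k'\in K$, because left-invariance of $\lambda_{G}$ removes the left factor $k$ and \eqref{compactmodularfuction} (using $\Delta_{G}(k'^{-1})=1$ for $k'\in K$) removes the right factor $k'$, and finally $\lambda_{K}(K)=1$ makes the two outer integrations trivial.

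For (i) I would prove the sharper inclusion $\supp f^{K}_{\pm}\subset K(\supp f_{\pm})K$. If $f^{K}_{+}(g)\neq 0$, i.e.\ $f^{K}(g)>0$, then $f(k_{0}gk'_{0})>0$ for some $k_{0},k'_{0}\in K$ --- otherwise the integral defining $f^{K}(g)$ would be $\le 0$ --- so $k_{0}gk'_{0}\in\supp f_{+}$ and hence $g\in K(\supp f_{+})K$; thus $\{g:f^{K}_{+}(g)\neq 0\}\subset K(\supp f_{+})K$, and symmetrically for $f^{K}_{-}$. The set $K(\supp f_{\pm})K$ is the image of the compact set $K\times\supp f_{\pm}\times K$ under multiplication, hence compact and in particular closed, so passing to closures yields $\supp f^{K}_{\pm}\subset K(\supp f_{\pm})K$. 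Since $\supp f_{\pm}\subset U$ and $U$ is $K$-bi-invariant, $K(\supp f_{\pm})K\subset KUK=U$, which is (i); note that this argument does not need $U$ itself to be closed, only $K$-bi-invariant.

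Part (ii) is the crux. The tempting move is to write $f^{K}$ as an average of translates of $f$ and invoke closure of the positive definite cone under positive combinations and pointwise limits, but this fails because neither $g\mapsto f(ag)$ nor $g\mapsto f(gb)$ is positive definite in general. Instead I would use the standard fact (see \cite{folland,sasvari,dijk}) that the continuous positive definite function $f$ admits a representation $f(g)=\langle\pi(g)\xi,\xi\rangle$ for a continuous unitary representation $(\pi,\mathcal{H})$ of $G$ and some $\xi\in\mathcal{H}$. Setting $\eta:=\int_{K}\pi(k)\xi\,\mathrm{d}\lambda_{K}(k)\in\mathcal{H}$, a weak integral that is well defined since $K$ is compact, and using $\pi(kgk')=\pi(k)\pi(g)\pi(k')$ together with unitarity, one computes
\[
f^{K}(g)=\int_{K}\int_{K}\langle\pi(g)\pi(k')\xi,\pi(k^{-1})\xi\rangle\,\mathrm{d}\lambda_{K}(k)\,\mathrm{d}\lambda_{K}(k')=\Bigl\langle\pi(g)\!\int_{K}\!\pi(k')\xi\,\mathrm{d}\lambda_{K}(k'),\ \int_{K}\!\pi(k^{-1})\xi\,\mathrm{d}\lambda_{K}(k)\Bigr\rangle,
\]
and since $\lambda_{K}$ is inversion-invariant (as $K$ is compact) both vectors equal $\eta$, whence $f^{K}(g)=\langle\pi(g)\eta,\eta\rangle$ is positive definite. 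A representation-free alternative is to approximate $\lambda_{K}$ weak-$*$ by finitely supported probability measures $\mu=\sum_{\ell}w_{\ell}\delta_{k_{\ell}}$ on $K$, observe that $g\mapsto\sum_{\ell,m}w_{\ell}w_{m}f(k_{\ell}gk_{m}^{-1})$ is positive definite --- because $k_{\ell}g_{i}^{-1}g_{j}k_{m}^{-1}=(g_{i}k_{\ell}^{-1})^{-1}(g_{j}k_{m}^{-1})$ and the positive weights absorb into the coefficients, so the relevant double sum is $\sum(c_{i}w_{\ell})\overline{(c_{j}w_{m})}\,f(\,\cdot\,)\ge 0$ --- and then pass to the pointwise limit, using inversion-invariance of $\lambda_{K}$ to identify it with $f^{K}$.

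The one real obstacle is (ii): one must avoid the false ``averaging of translates'' shortcut and instead exploit the two-sided $K$-symmetry of $f^{K}$ to produce the \emph{same} vector on both sides of the inner product (equivalently, to couple the left- and right-integrating copies of $\lambda_{K}$ by inversion). Granting that, parts (i) and (iii) are routine.
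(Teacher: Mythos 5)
Your proposal is correct, and for part (ii) it takes a genuinely different --- and in fact safer --- route than the paper. The paper proves (ii) by moving the double sum inside the $K\times K$ integral and concluding from the rewriting $kg_i^{-1}g_jk'=(g_ik^{-1})^{-1}(g_jk')$ that the integrand is non-negative; but for fixed $(k,k')$ the sum $\sum_{i,j}c_i\overline{c_j}\,f\bigl((g_ik^{-1})^{-1}(g_jk')\bigr)$ is not an instance of positive definiteness, because the left points $g_ik^{-1}$ and the right points $g_jk'$ come from two different lists (already for $n=1$, $g_1=e$, $c_1=1$ it equals $f(kk')$, which can be negative). Only the integral over $K\times K$ is non-negative, and establishing that requires exactly the coupling you supply: either the GNS representation $f=\langle\pi(\cdot)\xi,\xi\rangle$ with the single averaged vector $\eta=\int_K\pi(k)\xi\,\mathrm{d}\lambda_K(k)$ appearing on both sides of the inner product, or the discretization in which the pair $(i,\ell)$ is treated as one index, so the double sum becomes a genuine positive-definiteness sum over the points $g_ik_\ell^{-1}$ with coefficients $c_iw_\ell$, followed by a pointwise limit. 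Either of your two arguments closes this gap, and your remark about why the naive averaging shortcut fails identifies precisely the weak point of the paper's computation. For (i) you also do something slightly sharper: the paper shows only that $\{g:f^{K}_{+}(g)\ne 0\}\subset U$ and then passes to the support, which implicitly needs $U$ closed, whereas your inclusion $\supp f^{K}_{\pm}\subset K(\supp f_{\pm})K$, with $K(\supp f_{\pm})K$ compact and hence closed, yields the stated conclusion for an arbitrary $K$-bi-invariant $U$. Part (iii) is the same Fubini computation as in the paper.
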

\begin{proof}
    For (i), let us suppose that $\supp f_{+} \subset U$. Let $g \in G$ be such that $g \notin U$. We claim that $kgk' \notin U$ for all $k, k' \in K$. Indeed, if $kgk' \in U$, then $g \in KUK = U$. 

    Now, we have that $f(kgk') \le 0$ for all $g \notin U$ and all $k, k' \in K$. Therefore,
    \begin{equation*}
        f^{K}(g)=\int_{K}\int_{K}f(kgk')\mbox{d}\lambda_{K}(k)\mbox{d}\lambda_{K}(k') \le 0 \mbox{ for all } g \notin U,
    \end{equation*}
    and hence $\supp f^{K}_{+} \subset U$. An analogous argument shows that $\supp f^{K}_{-} \subset U$ whenever $\supp f_{-} \subset U$.
    
    For (ii), we have 
    \begin{equation*}
        \begin{split}
            \sum_{i=1}^{n}\sum_{j=1}^{n}f^{K}(g_i^{-1}g_j)c_i\overline{c_j} &=  \sum_{i=1}^{n}\sum_{j=1}^{n}\int_{K}\int_{K}f(kg_{i}^{-1}g_{j}k')c_i\overline{c_j}\textup{d}\lambda_{K}(k)\textup{d}\lambda_{K}(k')\\
            &= \int_{K}\int_{K} \sum_{i=1}^{n}\sum_{j=1}^{n}f((g_{i}k^{-1})^{-1}(g_jk'))c_i\overline{c_j}\textup{d}\lambda_{K}(k)\textup{d}\lambda_{K}(k') \ge 0.
        \end{split}
    \end{equation*}
    Therefore, $f^{K}$ is positive definite. 
    
    For (iii), let us use Fubini's Theorem to calculate the integral of $f^{K}$. We have
\begin{equation*}\label{tripleintegral}
    \begin{split}
    \int_{G}f^{K}(g)\textup{d}\lambda_{G}(g) &= \int_{G} \int_{K}\int_{K}f(kgk')\textup{d}\lambda_{K}(k)\textup{d}\lambda_{K}(k')\textup{d}\lambda_{G}(g)\\
    &= \int_{K} \int_{K}\int_{G}f(kgk')\textup{d}\lambda_{G}(g)\textup{d}\lambda_{K}(k)\textup{d}\lambda_{K}(k')\\
    &= \int_{K} \int_{K}\int_{G}f(g)\textup{d}\lambda_{G}(g)\textup{d}\lambda_{K}(k)\textup{d}\lambda_{K}(k')\\
    &=\int_{G}f(g)\textup{d}\lambda_{G}(g),
    \end{split}
\end{equation*}
where we have used the left invariance of the Haar measure $\lambda_{G}$ and the fact that $\Delta_{G}(k) = 1$ for all $k \in K$ (see \eqref{compactmodularfuction}) to go from the second equality to the third one, and the normalisation $\int_{K}1\textup{d}\lambda_{K} =1$ to get the last equality.  
\end{proof}

\begin{lemma}[Mazur's Lemma, {\cite[Corollary 3.8, Exercise 3.4]{brezis}}] \label{mazur} Let $E$ be a Banach space and let $(x_{n})_{n \in \mathbb{N}}$ be a sequence in $E$ converging to $x \in E$ weakly. Then there exists a sequence $(y_{n})_{n \in \mathbb{N}}$ in $E$ such that
	\begin{equation*}
	y_{n} \in \mathrm{conv}\left(\bigcup_{i=n}^{\infty}\{x_{i}\}\right) \textup{ for all } n \in \mathbb{N}
	\end{equation*}
and $(y_n)_{n \in \mathbb{N}}$ converges strongly to $x$ in $E$, where for a subset $A \subset E$, $\mathrm{conv}(A)$ refers to the convex hull of $A$, and strong convergence refers to convergence with respect to the norm topology on $E$.
\end{lemma}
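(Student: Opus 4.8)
The plan is to deduce Mazur's Lemma from the Hahn--Banach separation theorem, via the classical principle that a convex subset of a Banach space has the same closure with respect to the norm topology and with respect to the weak topology. Since this is a standard result, one could of course simply cite it (as the paper does); but here is the argument one would give.

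First I would fix $n \in \mathbb{N}$ and put $C_{n} := \mathrm{conv}\left(\bigcup_{i=n}^{\infty}\{x_{i}\}\right)$. Because $(x_{i})_{i}$ converges weakly to $x$, every weak neighbourhood of $x$ contains all but finitely many of the $x_{i}$, hence contains some $x_{i}$ with $i \ge n$; thus $x$ lies in the weak closure of the tail $\{x_{i} : i \ge n\}$, and a fortiori in the weak closure $\overline{C_{n}}^{\,w}$ of the convex set $C_{n}$. Next I would invoke the separation step: if a point $z \in E$ does not lie in the norm closure $\overline{C_{n}}^{\,\|\cdot\|}$, then, since $\overline{C_{n}}^{\,\|\cdot\|}$ is a closed convex set and $\{z\}$ is compact and convex, the Hahn--Banach separation theorem yields $\Lambda \in E^{*}$ and $\alpha \in \mathbb{R}$ with $\mathrm{Re}\,\Lambda(z) < \alpha \le \mathrm{Re}\,\Lambda(c)$ for all $c \in C_{n}$; the same strict inequality exhibits a weak neighbourhood of $z$ disjoint from $C_{n}$, so $z \notin \overline{C_{n}}^{\,w}$. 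Contrapositively, $\overline{C_{n}}^{\,w} \subset \overline{C_{n}}^{\,\|\cdot\|}$, and the reverse inclusion is automatic since the norm topology is finer than the weak topology; hence the two closures coincide, and in particular $x \in \overline{C_{n}}^{\,\|\cdot\|}$.

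Finally, for each $n$ I would use $x \in \overline{C_{n}}^{\,\|\cdot\|}$ to pick $y_{n} \in C_{n} = \mathrm{conv}\left(\bigcup_{i=n}^{\infty}\{x_{i}\}\right)$ with $\|y_{n} - x\| < 1/n$; then $(y_{n})_{n}$ has the required membership property and converges strongly to $x$. The only genuinely substantial input is the Hahn--Banach separation theorem for a point and a disjoint closed convex set; the rest is bookkeeping with neighbourhood bases and the nesting of the tails $C_{n} \supset C_{n+1} \supset \cdots$. The one point to watch is the choice of the separating functional in $E^{*}$ (rather than in some larger dual) so that separation in norm immediately implies separation in the weak topology — but this is exactly the setting of the statement, where weak convergence means $\sigma(E, E^{*})$-convergence.
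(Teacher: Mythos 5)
Your proof is correct and complete: it is exactly the standard Hahn--Banach argument (a norm-closed convex set is weakly closed, hence the weak limit $x$ lies in the norm closure of each tail hull $C_n$), which is the proof given in the cited reference; the paper itself states the lemma with a citation and no proof. Nothing to fix.
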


We prove the existence of an extremal function for $\sigma$-compact groups first, after which we shall extend it to groups that are not necessarily $\sigma$-compact.
\begin{theorem}\label{Ksigmacompactcase}
    Let $G$ be an amenable $\sigma$-compact locally compact group, $K$ a compact subgroup of $G$, $U$ a closed $K$-bi-invariant symmetric identity neighbourhood in \(G\) having finite Haar measure, and \(V\) a \(K\)-bi-invariant symmetric subset of \(G\). Then there exists an extremal function for the Delsarte constant $\mathcal{C}_{G}^{K}(U,V)$.
\end{theorem}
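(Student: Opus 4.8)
The plan is to extract a maximising sequence, pass to a subsequence with a weak-* limit, and then use Mazur's Lemma to repair the loss of positive definiteness and the support constraints, which are convex conditions.

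\medskip

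First I would fix notation: let $M := \mathcal{C}_{G}^{K}(U,V)$, which is finite (indeed $M \le \lambda_G(U) < \infty$, since every $\varphi \in \mathcal{F}_G^K(U,V)$ satisfies $|\varphi| \le \varphi(e) = 1$ and $\varphi$ vanishes off $U$ once we note $\supp\varphi = \supp\varphi_+ \cup \supp\varphi_- \subset U \cup V$ — actually only the positive part matters for the upper bound, but in any case $\varphi$ is bounded and supported in a set of finite measure once we also observe the negative part contributes nothing positive; more carefully, $\int_G \varphi \le \int_G \varphi_+ \le \lambda_G(U)$). Pick a sequence $(\varphi_n) \subset \mathcal{F}_G^K(U,V)$ with $\int_G \varphi_n \, \textup{d}\lambda_G \to M$. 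Since $\sigma$-compactness of $G$ makes $L^1(G)$ separable, $L^\infty(G) = L^1(G)^*$ has a weak-* sequentially compact unit ball (by the sequential Banach–Alaoglu theorem), and $\|\varphi_n\|_{L^\infty(G)} \le 1$, so after passing to a subsequence we may assume $\varphi_n \to \varphi_0$ in the weak-* topology $\sigma(L^\infty(G), L^1(G))$ for some $\varphi_0 \in L^\infty(G)$ with $\|\varphi_0\|_\infty \le 1$.

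\medskip

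The weak-* limit $\varphi_0$ is only an $L^\infty$-function a priori, so the work is to produce from it a genuine continuous positive definite $K$-bi-invariant function with the right normalisation and support, achieving the supremum. The key point is that all the defining constraints of $\mathcal{F}_G^K(U,V)$ — positive definiteness (in the integral sense, tested against $\varphi^* \ast \varphi$ for $\varphi \in C_c(G)$), $K$-bi-invariance, the support conditions $\supp\varphi_+ \subset U$, $\supp\varphi_- \subset V$ (as closed-set containments, i.e.\ $\varphi = 0$ a.e.\ off $U \cup V$ together with a sign condition), and the linear functional $\varphi \mapsto \int_G \varphi$ — are all preserved under convex combinations and under weak-* limits. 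Mazur's Lemma (Lemma \ref{mazur}), applied in $L^1(G)$ after noting the $\varphi_n$ all live in a fixed $L^1$ ball (they are supported in $U \cup V$... here I would need $V$ to also have finite measure, or restrict attention to the part in $U$; alternatively apply Mazur in $L^2$ of the finite-measure set, or work with the weak topology directly via Hahn–Banach separation), gives convex combinations $\psi_n \in \mathrm{conv}\{\varphi_m : m \ge n\}$ converging \emph{strongly} in $L^1$ (hence, along a further subsequence, pointwise a.e.) to $\varphi_0$; each $\psi_n$ still lies in $\mathcal{F}_G^K(U,V)$ by convexity, and $\int_G \psi_n \to M$ since $\int_G \varphi_m \to M$ and convex combinations of numbers converging to $M$ converge to $M$. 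Thus $\varphi_0$ is, off a null set, a pointwise limit of functions in $\mathcal{F}_G^K(U,V)$: it satisfies $\varphi_0(e)=1$ in the appropriate sense, it is $K$-bi-invariant a.e., integrally positive definite, supported appropriately, and $\int_G \varphi_0 = M$. Finally I would invoke a regularisation/continuity argument — an integrally positive definite $L^\infty$ function agrees a.e.\ with a continuous positive definite function (this is where one cites the standard correspondence between integrally positive definite functions and measures on the unitary dual, e.g.\ via \cite{folland} Section 3.3, or smooths by convolving $\varphi_0$ with an approximate identity $u_\alpha = v_\alpha^* \ast v_\alpha$ and takes a limit), applying $P^K$ to restore exact $K$-bi-invariance via Lemma \ref{Ksymmetrisation}, to obtain the desired continuous extremiser $\varphi^* \in \mathcal{F}_G^K(U,V)$ with $\int_G \varphi^* \, \textup{d}\lambda_G = M = \mathcal{C}_G^K(U,V)$.

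\medskip

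The main obstacle I expect is the last step: upgrading the weak-* limit $\varphi_0 \in L^\infty$, which is merely \emph{integrally} positive definite and only defined up to a null set, to an honest continuous positive definite function without losing the support containments or the value of the integral. The normalisation $\varphi_0(e) = 1$ is also delicate since $\{e\}$ is typically null, so one must argue that the continuous representative has value $1$ at $e$ — this should follow because $\varphi_0$ continuous positive definite forces $\|\varphi_0\|_\infty = \varphi_0(e)$, and $\|\varphi_0\|_\infty \le 1$ while $\int \varphi_0 = M > 0$ prevents degeneration, combined with testing against approximate identities concentrating at $e$. Handling the support constraint for $\varphi_-$ when $V$ is not of finite measure (so that $L^1$-methods need care) may require working with the weak topology of $L^\infty$ directly, or noting that only $\supp\varphi_+ \subset U$ with $U$ of finite measure is needed to control the integral while $\supp\varphi_- \subset V$ is a closed condition preserved under a.e.\ limits regardless. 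Once $\sigma$-compactness is handled, the extension to general amenable $G$ is a routine reduction: any $\varphi \in \mathcal{F}_G^K(U,V)$ is supported in the open $\sigma$-compact subgroup generated by a suitable neighbourhood of $\supp\varphi$, and amenability and the relevant Haar/projection structure pass to open subgroups, so $\mathcal{C}_G^K(U,V)$ is a supremum over $\sigma$-compact pieces to which the theorem just proved applies.
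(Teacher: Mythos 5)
Your overall architecture (maximising sequence, weak compactness, Mazur's Lemma to recover strong convergence, pointwise a.e.\ limit, upgrading an integrally positive definite $L^\infty$ limit to a continuous positive definite representative via $\sigma$-compactness, then $P^K$ and the normalisation argument at $e$) is the same as the paper's. But the functional-analytic step as you set it up would fail, and the issue you flag but leave unresolved is precisely where the missing idea lives. First, the weak-* framework in $L^\infty(G)=L^1(G)^*$ is the wrong topology for Mazur's Lemma: that lemma rests on the coincidence of norm-closed and weakly closed convex sets (a Hahn--Banach consequence for the topology $\sigma(E,E^*)$), and it is false for $\sigma(E^*,E)$ --- a norm-closed convex set in a dual space need not be weak-* closed, so a weak-* limit need not be approximable in norm by convex combinations of the sequence. (Separately, sequential weak-* compactness of the unit ball of $L^\infty(G)$ requires separability of $L^1(G)$, which $\sigma$-compactness of $G$ does not guarantee.) So you are forced into your fallback of working in $L^1$ or $L^2$, and there you need a uniform norm bound on the maximising sequence; since the negative parts are only constrained to live in $V$, which may have infinite Haar measure, this bound is not automatic. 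The resolution --- and the one place amenability is actually used in the $\sigma$-compact case --- is that every real continuous positive definite function on an amenable group has non-negative integral, whence $\int_G(\varphi_n)_-\,\textup{d}\lambda_G \le \int_G(\varphi_n)_+\,\textup{d}\lambda_G \le \lambda_G(U)$, so $\|\varphi_n\|_{L^1(G)}\le 2\lambda_G(U)$ and, since $|\varphi_n|\le 1$, also $\|\varphi_n\|_{L^2(G)}\le\sqrt{2\lambda_G(U)}$. The paper then works in the Hilbert space $L^2(G)$, where bounded sequences have weakly convergent subsequences with no separability hypothesis and Mazur's Lemma applies legitimately. Your proposal never invokes amenability for this purpose, so the bound is genuinely missing.

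A second, related gap is the assertion that $\int_G\varphi_0\,\textup{d}\lambda_G = M$ because the integral is ``preserved under weak-* limits'': when $G$ has infinite measure, $\mathbf{1}_G\notin L^1(G)$, so no single $L^1$ pairing recovers the total integral, and the negative mass sitting on a possibly infinite-measure $V$ is not controlled by testing against $\mathbf{1}_U$. The correct argument splits the limit function: dominated convergence (with dominating function $\mathbf{1}_U$) handles the positive parts, Fatou's lemma gives $\int_G(\varphi_0)_-\le\liminf_n\int_G(\varphi_n)_-$, and subtracting yields $\int_G\varphi_0\ge M$; the reverse inequality then follows because $\varphi_0$ (after the continuity correction, the $P^K$ projection, and the normalisation at $e$) is itself admissible. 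With these two repairs --- the amenability-based $L^2$ bound enabling the weak-compactness/Mazur step, and the Fatou/dominated-convergence computation of the limiting integral --- your outline matches the paper's proof.
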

\begin{proof}
Note that for all $f \in  \mathcal{F}_{G}^{K}(U,V)$, we have $|f(g)|\le 1$ for all $g \in G$. Furthermore, since $G$ is amenable, $f$ has non-negative integral \cite[Theorem 8.9]{pier}. The non-negativity of the integral of $f$ implies the estimate $$\int_G f_{-}\mbox{d}\lambda_G \le \int_{G}f_{+}\mbox{d}\lambda_{G} \le \lambda_{G}(U) < \infty.$$Hence $\|f\|_{L^{1}(G)} \le 2 \lambda_{G}(U)$, which in turn implies that $\|f\|_{L^{2}(G)} \le \sqrt{2\lambda_{G}(U)}$ for all $f\in~\mathcal{F}_{G}^{K}(U,V)$. Let $r: = \sqrt{2\lambda_{G}(U)}$. Then, $\mathcal{F}_{G}^{K}(U,V)$ is a bounded subset of $L^{2}(G)$, and as such it belongs to the strong closure $\overline{B_{r}(0)}$ of the ball $B_{r}(0)$ of radius $r$ in $L^2(G)$ centred at 0. Being a closed and bounded subset of $L^2(G)$, $\overline{B_{r}(0)}$ is weakly sequentially compact according to \cite[Theorem 3.18]{brezis}.\par
    Now let $(f_{n})_{n \in \mathbb{N}}$ be a $\mathcal{C}^{K}_{G}(U, V)$-extremal sequence in $\mathcal{F}_{G}^{K}(U,V)$, that is, $$\lim_{n \to \infty} \int_{G}f_n(g)\textup{d}\lambda_{G}(g) = \mathcal{C}^{K}_{G}(U, V).$$ By the weak sequential compactness of $\overline{B_{r}(0)}$, there is a subsequence of $(f_{n})_{n \in \mathbb{N}}$ that converges weakly in $L^2(G)$ to some $f \in \overline{B_{r}(0)}$. By replacing $(f_{n})_{n \in \mathbb{N}}$ with this subsequence, assume that $(f_{n})_{n \in \mathbb{N}}$ converges weakly in $L^2(G)$ to $f$. Since $\mathcal{F}_{G}^{K}(U,V)$ is a convex subset of $L^2(G)$, Lemma \ref{mazur} implies that there is a sequence $(h_{n})_{n \in \mathbb{N}}$ in $\mathcal{F}_{G}^{K}(U,V)$ chosen such that $h_{n} \in \mathrm{conv} (\bigcup _{i =n}^{\infty}{f_{i}})$ for all $n \in \mathbb{N}$, and converging to $f$ strongly in $L^2(G)$. We claim that the latter sequence is $\mathcal{C}^{K}_{G}(U, V)$-extremal. First note that by the extremality of $(f_{n})_{n \in \mathbb{N}}$, we have that for any $\varepsilon > 0$ there exists $N \in \mathbb{N}$ such that for all $n > N$ it holds that
	\begin{equation*}
	\left |\int_{G}f_{n}(g)\textup{d}\lambda_{G}(g) - \mathcal{C}^{K}_{G}(U, V) \right| < \varepsilon.
	\end{equation*}
Now, write $h_{n} = \sum_{i =n}^{\infty}c_{i}^{n}f_{i}$ where $c_{i}^{n} \ge 0$, $\sum_{i = n}^{\infty}c_{i}^{n} = 1$, and $c_{i}^{n} = 0$ for all but finitely many $i \ge n$. Then for any $n \ge N$, we have
	\begin{equation*}
	\left | \int_{G}h_{n}(g)\textup{d}\lambda_{G}(g) - \mathcal{C}^{K}_{G}(U, V) \right| \le \sum_{i =n}^{\infty}c_{i}^{n}\left|\int_{G}f_{i}(g)\textup{d}\lambda_{G}(g) - \mathcal{C}^{K}_{G}(U, V) \right| < \sum_{i = n}^\infty c_{i}^{n}\varepsilon = \varepsilon.
	\end{equation*}
Therefore $(h_{n})_{n \in \mathbb{N}}$ is $\mathcal{C}^{K}_{G}(U, V)$-extremal. So, replacing, if necessary, the sequence $(f_{n})_{n \in \mathbb{N}}$ by this new sequence, assume that $(f_{n})_{n \in \mathbb{N}}$ converges to $f$ strongly in $L^2(G)$.\par
By passing to a pointwise almost everywhere convergent subsequence, assume that $(f_{n})_{n \in \mathbb{N}}$ converges to $f$ pointwise almost everywhere. The fact that for all $n \in \mathbb{N}$, $|f_{n}(g)|\le~1$ for all $g \in G$ and $(f_{n})_{n \in \mathbb{N}}$ converges almost everywhere to $f$ implies that $f$ is bounded in $L^{\infty}(G)$ by 1. Since $(f_{n})_{n \in \mathbb{N}}$ converges weakly in $L^2(G)$ to $f$, we have
	$$
	\int_{G}(h^{\ast} \ast h)(g)f(g) \textup{d}\lambda_{G}(g) = \lim_{n \to \infty} \int_{G}(h^{\ast}\ast h)(g)f_{n}(g)(g)\textup{d}\lambda_{G}(g) \ge 0,
	$$
	for all compactly supported continuous complex-valued functions $h$. This shows that $f$ is integrally positive definite. The function $f$ being integrally positive definite and $G$ being $\sigma$-compact implies that $f$ agrees almost everywhere with a continuous positive definite function. Thus, by correcting $f$ on a set of Haar measure zero, thus not changing the value of its integral, assume that $f$ is a continuous positive definite function.\par
The fact that $\supp f_{+} \subset U$ and $\supp f_{-} \subset V$ follows from the following facts: that $f$ is continuous, $(f_n)_{n \in \mathbb{N}}$ converges to $f$ almost everywhere, that $\supp({f}_n)_{+} \subset U$, $\supp({f}_n)_{-} \subset V$ and that $U$ and $V$ are closed.

Let us consider the projection $P^{K}(f) =f^{K}$ of $f$. By Lemma \ref{Ksymmetrisation}, $f^K$ is a continuous positive definite $K$-bi-invariant function with $\supp f^{K}_{+}  \subset U$, $\supp f^{K}_{-} \subset V$, and having total integral equal to that of $f$. Therefore, by replacing $f$ by $f^{K}$, let us assume that $f$ is $K$-bi-invariant.\par
Since $\supp f_{+} \subset U$ we have 

       \begin{equation}\label{positivitybound}
        \int_{G}f_{+}(g)\textup{d}\lambda_{G}(g) = \int_{U}f_{+}(g)\textup{d}\lambda_{G}(g) \le \lambda_{G}(U) < \infty.
    \end{equation}
    On the other hand, by Fatou's lemma, 

    \begin{equation}\label{negativitybound}
        \begin{split}
            \int_{G}f_{-}(g)\textup{d}\lambda_{G}(g) & \le \liminf_{n \to \infty} \int_{G}(f_{n})_{-}(g)\textup{d}\lambda_{G}(g)\\ 
            &\le \liminf_{n \to \infty}\int_{G}(f_{n})_{+}(g)\textup{d}\lambda_{G}(g) \\
            & \le \lambda_{G}(U) < \infty,    
        \end{split}
    \end{equation}
    where the second inequality of \eqref{negativitybound} is obtained using that $f_{n}$ being positive definite and integrable implies that
    \begin{equation*}
       0 \le \int_{G}f_{n}(g)\mbox{d}\lambda_{G}(g) = \int_{G}(f_{n})_{+}(g)\mbox{d}\lambda_{G}(g) - \int_{G}(f_{n})_{-}(g)\mbox{d}\lambda_{G}(g).
    \end{equation*} 
     Together, \eqref{positivitybound} and \eqref{negativitybound} imply that 
\begin{equation*}
      \int_{G}|f(g)|\textup{d}\lambda_{G}(g) =   \int_{G}f_{+}(g)\textup{d}\lambda_{G}(g) +  \int_{G}f_{-}(g)\textup{d}\lambda_{G}(g) < \infty.
\end{equation*}
Therefore, $f \in L^1(G)$. Next, we show that $\int_{G}f\mbox{d}\lambda_{G} = \mathcal{C}^{K}_{G}(U,V)$. To that end, note that the sequence $((f_{n})_{+})_{n \in \mathbb{N}}$ of the positive parts of the $f_{n}$'s converges pointwise almost everywhere to $f_{+}$, and that $(f_{n})_{+} \le \mathbf{1}_{U}$. Therefore, by Lebesgue's dominated convergence theorem and $\lambda_{G}(U) < \infty$, we have
  \begin{equation}\label{eq10}
     \int_{G}f_{+}(g)\mbox{d}\lambda_{G}(g) =  \lim_{n \to \infty} \int_{G}(f_{n})_{+}(g)\mbox{d}\lambda_{G}(g).
  \end{equation}
  Recall that from  \eqref{negativitybound} we have the inequality
  \begin{equation}\label{eq9}
            \int_{G}f_{-}(g)\lambda_{G}(g) \le \liminf_{n \to \infty}\int_{G}(f_{n})_{-}(g)\mbox{d}\lambda_{G}(g).
 \end{equation}
  Now, subtracting \eqref{eq9} from \eqref{eq10} we have
  \begin{equation*}
\begin{split}
    \int_G f(g)  \text{d}\lambda_G(g) &\ge  \lim_{n \to \infty} \int_{G}(f_n)_{+}(g)\text{d}\lambda_G(g) - \liminf_{n \to \infty}{\int_{G}(f_n)_{-}(g)} \text{d}\lambda_G(g) \\
    &= \limsup_{n \to \infty}{\int_{G}\left((f_n)_{+}(g) - (f_n)_{-}(g)\right) \text{d}\lambda_G(g)} \\
    &= \lim_{n \to \infty}\int_{G}f_n (g) \mbox{d}\lambda _{G}(g) = \mathcal{C}^{K}_{G}(U,V).
\end{split}
\end{equation*}

Finally, we show that $f(e)=1$. To that end, note that $f(e) \le 1$, otherwise, we could find some $f_n$ and some $g \in U$ such that $f_n(g) > 1$, which would contradict $f_n(e)\le~1$. Also, $f(e) \ne 0$, otherwise $f$ would be identically zero, contradicting the fact that $\int_{G}f(g)\textup{d}\lambda_{G}(g)=\mathcal{C}_{G}^{K}(U,V) \ne 0$. So, in order to show that $f(e) = 1$, we assume, for a contradiction, that $0<f(e) <1$. Then the function $h := \frac{f}{f(e)}$ is in $\mathcal{F}_{G}^{K}(U, V)$ and $\int_{G}h(g)\textup{d}\lambda_{G}(g) > \mathcal{C}_{G}^{K}(U,V)$, contradicting the optimality of $\mathcal{C}_{G}^{K}(U,V)$. It follows that $f(e) = 1$, and hence $f \in \mathcal{F}_{G}^{K}(U, V)$, as required.
\end{proof}
Let us now extend Theorem \ref{Ksigmacompactcase} to the case where $G$ is not a $\sigma$-compact group.

 For a function $\varphi: H \to \mathbb{C}$ defined on the subgroup $H$ of $G$, its trivial extension is the function $\widetilde{\varphi}: G \to \mathbb{C}$ defined on $G$, and given by
\begin{equation*}
\widetilde{\varphi}(g) := \begin{cases}
\varphi(g) & \text{if } g \in H, \\
0  & \text{if } g \in G\setminus H.
\end{cases}
\end{equation*}

The following lemma is obvious.
\begin{lemma}\label{restrictextendKbi}
    Let $G$ be a locally compact group, $K$ a compact subgroup of $G$, and $H$ an open subgroup of $G$ containing $K$. Then the following statements hold:
    \begin{itemize}
        \item [(i)] For a $K$-bi-invariant function $f: G \to \mathbb{C}$, its restriction $f|_H: H \to \mathbb{C}$ to the subgroup $H$ is $K$-bi-invariant.
        \item [(ii)]  For a $K$-bi-invariant function $f: H \to \mathbb{C}$, its trivial extension $\widetilde{f}: G \to \mathbb{C}$ is $K$-bi-invariant.
    \end{itemize} 
\end{lemma}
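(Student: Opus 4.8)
The plan is a completely direct verification from the definitions; there is no genuine obstacle, which is exactly why the lemma is flagged as obvious. The only hypothesis that does any work is $K \subseteq H$, and I would make its role explicit.

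For part (i), I would first note that since $K \subseteq H$ and $H$ is a subgroup of $G$, for every $h \in H$ and all $k, k' \in K$ we have $khk' \in H$; hence $f|_H(khk')$ is defined. Then, using the $K$-bi-invariance of $f$ on $G$, I compute $f|_H(khk') = f(khk') = f(h) = f|_H(h)$ for all $h \in H$ and all $k, k' \in K$, which is precisely the assertion that $f|_H$ is $K$-bi-invariant.

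For part (ii), I would split into the two cases in the definition of the trivial extension. If $g \in H$, then as above $kgk' \in H$ for all $k, k' \in K$, so $\widetilde{f}(kgk') = f(kgk') = f(g) = \widetilde{f}(g)$. If $g \in G \setminus H$, the key point is that $kgk' \notin H$ as well: were $kgk' \in H$, then, since $k^{-1}, k'^{-1} \in K \subseteq H$, we would obtain $g = k^{-1}(kgk')k'^{-1} \in H$, a contradiction. Hence $\widetilde{f}(kgk') = 0 = \widetilde{f}(g)$. Combining the two cases yields that $\widetilde{f}$ is $K$-bi-invariant.

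I would close by remarking that neither the openness of $H$ nor the compactness of $K$ enters this particular argument — openness of $H$ is needed later (for instance, to restrict Haar measure and to restrict continuous positive definite functions to $H$), but here only the inclusion $K \subseteq H$ matters, as it guarantees that two-sided translation by $K$ preserves both $H$ and its complement $G \setminus H$.
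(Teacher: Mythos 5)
Your proof is correct and is exactly the direct verification the paper has in mind when it declares the lemma ``obvious'' and omits the proof; the key observation that $K \subseteq H$ forces $kgk' \in H \iff g \in H$ is the whole content. Your closing remark that openness of $H$ and compactness of $K$ play no role here (only the inclusion $K \subseteq H$) is also accurate.
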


Using Lemma \ref{restrictextendKbi}, we have the following Lemma.

\begin{lemma}\label{extensionlemma}
	Let $G$ be a locally compact group, $K$ a compact subgroup of $G$, and $H$ an open subgroup of $G$ containing $K$. Let $U$ be a $K$-bi-invariant symmetric neighbourhood of identity in $G$, and $V$ be $K$-bi-invariant symmetric subset of $G$. If a function $\varphi : H \to \mathbb{R}$ is in $\mathcal{F}^{K}_{H}(U \cap H, V \cap H)$, then its trivial extension $\widetilde{\varphi}: G \to \mathbb{R}$ is in $\mathcal{F}^{K}_{G}(U, V)$.
\end{lemma}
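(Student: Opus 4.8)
The plan is to verify, one property at a time, that the trivial extension $\widetilde{\varphi}$ meets all the defining conditions for membership in $\mathcal{F}^{K}_{G}(U,V)$: continuity, positive definiteness, $K$-bi-invariance, the normalisation $\widetilde{\varphi}(e)=1$, and the two support conditions $\supp\widetilde{\varphi}_{+}\subset U$ and $\supp\widetilde{\varphi}_{-}\subset V$.

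First I would record the elementary but crucial observation that an open subgroup $H$ of $G$ is also closed, being the complement of the union of its nontrivial cosets, each of which is open. Consequently $\widetilde{\varphi}$ is continuous: it agrees with the continuous function $\varphi$ on the open set $H$ and with the constant $0$ on the open set $G\setminus H$, and these two sets cover $G$. The normalisation is immediate, since $e\in K\subset H$ gives $\widetilde{\varphi}(e)=\varphi(e)=1$, and $K$-bi-invariance of $\widetilde{\varphi}$ is exactly Lemma \ref{restrictextendKbi}(ii).

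The substantive step is positive definiteness. Given $g_1,\dots,g_n\in G$ and $c_1,\dots,c_n\in\mathbb{C}$, I would partition the index set $\{1,\dots,n\}$ according to the left coset $g_iH$ in which $g_i$ lies. If $g_i$ and $g_j$ lie in different cosets, then $g_i^{-1}g_j\notin H$, so $\widetilde{\varphi}(g_i^{-1}g_j)=0$ and these terms drop out of the double sum. Within a single coset, picking a representative and writing $g_i=th_i$ with $h_i\in H$ yields $g_i^{-1}g_j=h_i^{-1}h_j\in H$, so the block contribution equals $\sum c_i\overline{c_j}\varphi(h_i^{-1}h_j)\ge 0$ by positive definiteness of $\varphi$ on $H$. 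Summing the nonnegative block contributions gives $\sum_{i,j}c_i\overline{c_j}\widetilde{\varphi}(g_i^{-1}g_j)\ge 0$, as needed.

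Finally, for the support conditions I would note that taking positive and negative parts commutes with setting a function to $0$ outside $H$, so $\widetilde{\varphi}_{\pm}$ is the trivial extension of $\varphi_{\pm}$; in particular $\{g\in G:\widetilde{\varphi}_{+}(g)\ne 0\}=\{h\in H:\varphi_{+}(h)\ne 0\}\subset H$. Since $H$ is closed in $G$, the closure of this set in $G$ coincides with its closure in $H$, namely $\supp\varphi_{+}\subset U\cap H\subset U$; the same reasoning applied to $\widetilde{\varphi}_{-}$ gives $\supp\widetilde{\varphi}_{-}\subset V\cap H\subset V$. I do not expect a serious obstacle here: the only places that warrant a moment's care are the coset-partition argument for positive definiteness and the standard fact that, for a subset of the closed subgroup $H$, closure in $G$ agrees with closure in $H$.
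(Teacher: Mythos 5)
Your proof is correct, and its overall structure matches the paper's: both verify continuity via the fact that an open subgroup is closed (so $\widetilde{\varphi}$ agrees with a continuous function on each of the two open sets $H$ and $G\setminus H$ covering $G$), both get $K$-bi-invariance from Lemma \ref{restrictextendKbi}(ii), and both treat the normalisation and support conditions as routine. The one genuine divergence is the positive-definiteness step: the paper simply cites \cite[Theorem 32.43(a)]{hewittross2}, whereas you prove it from scratch by partitioning the points $g_1,\dots,g_n$ into left cosets of $H$, observing that cross-coset terms vanish because $g_iH\ne g_jH$ forces $g_i^{-1}g_j\notin H$, and reducing each diagonal block to the positive definiteness of $\varphi$ on $H$ via $g_i=th_i$. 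This coset argument is exactly the content of the cited Hewitt--Ross result, so what you gain is a self-contained proof at the cost of a few extra lines; you are also slightly more careful than the paper on the support conditions, noting explicitly that $\widetilde{\varphi}_{\pm}$ is the trivial extension of $\varphi_{\pm}$ and that closure in $G$ of a subset of the closed subgroup $H$ agrees with closure in $H$, a point the paper passes over with ``clearly.''
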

\begin{proof}
    Suppose that $\varphi: H \to \mathbb{R}$ is in $\mathcal{F}^{K}_{H}(U \cap H, V \cap H)$. This means that $\varphi$ is continuous, positive definite, has $\supp \varphi_+ \subset U$, $\supp \varphi_{-} \subset V$, and satisfies $\varphi(0) = 1$. Clearly, its trivial extension satisfies $\supp \widetilde{\varphi}_{+} \subset U$ and $\supp \widetilde{\varphi}_{-} \subset V$, and $\widetilde{\varphi}(0) = 1$. To see that $\widetilde{\varphi}$ is continuous, note that $H$, being an open subgroup, is also closed, and hence its complement $G \setminus H$ is open. Now,  $\widetilde{\varphi}$ is continuous at all $g \in H$ because it coincides with the continuous function $\varphi: H \to \mathbb{R}$ on the open set $H$. It is continuous at all $g \in G \setminus H$ because it is identically 0 on the open set $G \setminus H$.  It is positive definite by  \cite[Theorem 32.43(a)]{hewittross2}. 
\end{proof} \par 
We shall also make use of the following obvious lemma.
\begin{lemma}\label{restrictionlemma}
	Let $G$ be a locally compact group and $K$ a compact subgroup of $G$, and $H$ an open subgroup $G$ containing $K$. Let $U$ be a $K$-bi-invariant symmetric neighbourhood of identity in $G$ with $U \subset H$, and $V$ be a $K$-bi-invariant symmetric subset of $G$. If a function $\varphi: G \to \mathbb{R}$ is in $\mathcal{F}^{K}_{G}(U, V)$, then its restriction $\varphi |_{H}: H \to \mathbb{R}$ to the subgroup $H$ is in $\mathcal{F}^{K}_{H}(U, V \cap H)$.
\end{lemma}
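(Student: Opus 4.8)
The plan is to check, item by item, that the restriction $\varphi|_H$ lies in $\mathcal{F}^{K}_{H}(U, V\cap H)$; since $U \subset H$ we have $U \cap H = U$, so the only support constraints that actually need to be verified are $\supp(\varphi|_H)_+ \subset U$ and $\supp(\varphi|_H)_- \subset V \cap H$.

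First I would note that continuity of $\varphi|_H$ is immediate, and that positive definiteness is inherited: the defining double sum for positive definiteness involves only the chosen group elements $g_i$ together with the products $g_i^{-1}g_j$, all of which lie in $H$ whenever the $g_i$ do, so restricting a positive definite function on $G$ to the subgroup $H$ yields a positive definite function on $H$. That $\varphi|_H$ is $K$-bi-invariant follows from Lemma \ref{restrictextendKbi}(i) (recall $K \subset H$), and evaluating at the identity gives $\varphi|_H(e) = \varphi(e) = 1$.

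The only point that deserves comment is the support condition, and here I would invoke the fact that an open subgroup of a topological group is also closed. Since the positive part of $\varphi|_H$ is exactly the restriction of $\varphi_+$ (and likewise for negative parts), and since $H$ is closed in $G$, the closure taken inside $H$ of the set $\{h \in H : \varphi_+(h) \ne 0\}$ coincides with its closure in $G$; as this set is contained in $\{g \in G : \varphi_+(g) \ne 0\}$, we obtain $\supp(\varphi|_H)_+ \subset \supp\varphi_+ \subset U$. The identical argument gives $\supp(\varphi|_H)_- \subset \supp\varphi_- \cap H \subset V \cap H$. Collecting these observations shows $\varphi|_H \in \mathcal{F}^{K}_{H}(U, V\cap H)$, as required.

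I do not expect any genuine obstacle; as the authors indicate, the lemma is obvious, and the only mildly delicate point — the compatibility of the closure computed in $H$ with the closure computed in $G$ — is dispatched by the openness, hence closedness, of $H$.
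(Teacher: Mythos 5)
Your proof is correct and matches the paper's treatment: the paper simply declares this lemma obvious (its proof is omitted from the compiled text), and the verification you give — restriction preserves continuity, positive definiteness, $K$-bi-invariance, and the normalisation, while the openness (hence closedness) of $H$ makes the support inclusions compatible — is exactly the routine check the author has in mind.
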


\begin{lemma}[{\cite[Lemma 5]{berdrevram}}]\label{sigma-compact-lemma}
A Haar measurable subset $U$ of a LCA group $G$ is $\sigma$-finite if and only if it is contained in an open $\sigma$-compact subgroup.    
\end{lemma}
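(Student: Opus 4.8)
The statement is a biconditional, and the reverse implication is immediate. If \(U\) is contained in an open \(\sigma\)-compact subgroup \(H\), write \(H=\bigcup_{n}C_{n}\) with each \(C_{n}\) compact; since Haar measure is finite on compact sets, \(U=\bigcup_{n}(U\cap C_{n})\) exhibits \(U\) as a countable union of measurable sets of finite measure, so \(U\) is \(\sigma\)-finite. The content of the lemma is therefore the forward implication, and the plan for it is as follows.

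Suppose \(U=\bigcup_{n}U_{n}\) with each \(U_{n}\) Haar measurable of finite measure. First I would upgrade the pieces to open sets: by outer regularity of Haar measure (applied to a Borel hull of \(U_{n}\) of the same measure, if one wishes to be careful about the measurability class), choose for each \(n\) an open set \(W_{n}\supseteq U_{n}\) with \(\lambda_{G}(W_{n})<\infty\), so that \(U\subseteq W:=\bigcup_{n}W_{n}\) with \(W\) a countable union of open sets of finite measure. Next, fix a compact symmetric identity neighbourhood \(V\) and set \(H_{0}:=\bigcup_{m\ge 1}V^{m}\); this is an open \(\sigma\)-compact subgroup of \(G\), and the quotient \(G/H_{0}\) is discrete (so every coset of \(H_{0}\) is open), with quotient map \(\pi:G\to G/H_{0}\). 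The crucial observation is that \(\pi(W)\) is countable: writing \(W_{n}=\bigsqcup_{c\in\pi(W_{n})}\bigl(W_{n}\cap\pi^{-1}(c)\bigr)\) as a disjoint union of \emph{nonempty open} sets, each summand has strictly positive Haar measure, so the finiteness of \(\lambda_{G}(W_{n})\) forces \(\pi(W_{n})\) to be countable, and hence so is \(\pi(W)=\bigcup_{n}\pi(W_{n})\). Finally, let \(\Gamma\) be the subgroup of \(G/H_{0}\) generated by the countable set \(\pi(W)\), and put \(H:=\pi^{-1}(\Gamma)\). Then \(H\) is a subgroup, it is open (preimage of a set that is open since \(G/H_{0}\) is discrete), and it is \(\sigma\)-compact, being a countable union of cosets of \(H_{0}\), each homeomorphic to \(H_{0}\); moreover \(U\subseteq W\subseteq\pi^{-1}(\pi(W))\subseteq\pi^{-1}(\Gamma)=H\), which is exactly the conclusion.

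The only genuinely delicate step is the passage from the \(\sigma\)-finite \emph{measurable} set \(U\) to a \(\sigma\)-finite \emph{open} overset \(W\); this is where outer regularity of Haar measure is used, and it is essential because the mechanism converting finiteness of measure into countability of the image in the discrete quotient relies on nonempty open sets having strictly positive Haar measure. The remaining ingredients are standard and soft: every locally compact group contains an open \(\sigma\)-compact subgroup, an open subgroup yields a discrete quotient, and a summable family of strictly positive reals is necessarily countable. (Note that commutativity is never used, so the statement in fact holds for arbitrary locally compact \(G\); only the LCA case is needed here.)
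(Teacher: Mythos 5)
Your proof is correct. Note, however, that the paper does not prove this lemma at all: it is quoted with a citation to \cite{berdrevram}, so there is no in-paper argument to compare against. Your route is the standard one for this fact --- use outer regularity to enlarge the $\sigma$-finite measurable set $U$ to a $\sigma$-finite \emph{open} set $W$, observe that $W$ can meet only countably many cosets of a fixed open $\sigma$-compact subgroup $H_0$ (since each coset it meets contributes strictly positive measure to the finite total), and then pass to the subgroup generated by $H_0$ and those countably many cosets --- and every step checks out, including the care taken with the Borel hull before invoking outer regularity. One small quibble with your closing parenthetical: as written, the last step forms a subgroup of $G/H_0$, which presupposes that $H_0$ is normal; this is automatic in the LCA setting of the lemma, but in a general locally compact group $G/H_0$ is only a coset space, and one should instead take the subgroup of $G$ generated by $H_0$ together with countably many coset representatives meeting $W$ (still open, and $\sigma$-compact as a countable union of products of $\sigma$-compact sets). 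So the statement does extend to non-commutative $G$, but not quite verbatim by your construction.
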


\begin{lemma}[Compare {\cite[Theorem 3]{ramabulana}, \cite[Theorem 6]{berdrevram}}]\label{Kreduction}Let $G$ be a locally compact group, $K$ a compact subgroup of $G$, $U$ a $\sigma$-finite $K$-bi-invariant symmetric neighbourhood of $G$, and $V$ be a $K$-bi-invariant symmetric subset of $G$. Then there exists an open $\sigma$-compact subgroup $H$ of $G$, containing $K$ and $U$, such that
\begin{equation*}
    \mathcal{C}_{G}^{K}(U,V) = \mathcal{C}_{H}^{K}(U,V).
\end{equation*}
\end{lemma}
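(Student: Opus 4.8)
The plan is to squeeze $\mathcal{C}_{G}^{K}(U,V)$ between $\mathcal{C}_{H}^{K}(U,V)$ from both sides: the trivial extension of Lemma \ref{extensionlemma} gives one inequality, and the restriction of Lemma \ref{restrictionlemma} gives the other, once a suitable $H$ has been produced. So the first step is to build $H$. Since $U$ is a $\sigma$-finite $K$-bi-invariant symmetric identity neighbourhood, Lemma \ref{sigma-compact-lemma} (the implication we need, producing an enclosing open $\sigma$-compact subgroup, does not use commutativity) supplies an open $\sigma$-compact subgroup $H_{0} \subseteq G$ with $U \subseteq H_{0}$. Put $H := \langle H_{0} \cup K \rangle$. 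Because $K$ is compact and $H_{0}$ is $\sigma$-compact, $H_{0} \cup K$ is $\sigma$-compact, hence so is the subgroup it generates; and $H$ is open because it contains the open subgroup $H_{0}$. Thus $H$ is an open (hence closed) $\sigma$-compact subgroup containing both $K$ and $U$. I normalise $\lambda_{H} := \lambda_{G}|_{H}$, which is a Haar measure on $H$ compatible with the normalisation $\lambda_{K}(K)=1$ since $K \subseteq H$. Note also that every real function on $H$ has support contained in $H$, so $\mathcal{F}_{H}^{K}(U,V) = \mathcal{F}_{H}^{K}(U \cap H, V \cap H) = \mathcal{F}_{H}^{K}(U, V \cap H)$, using $U \cap H = U$.

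For $\mathcal{C}_{H}^{K}(U,V) \le \mathcal{C}_{G}^{K}(U,V)$: given $\psi \in \mathcal{F}_{H}^{K}(U,V) = \mathcal{F}_{H}^{K}(U \cap H, V \cap H)$, Lemma \ref{extensionlemma} gives $\widetilde{\psi} \in \mathcal{F}_{G}^{K}(U,V)$, and since $\widetilde{\psi}$ vanishes off $H$ and $\lambda_{H} = \lambda_{G}|_{H}$, we have $\int_{G}\widetilde{\psi}\,\textup{d}\lambda_{G} = \int_{H}\psi\,\textup{d}\lambda_{H}$; taking the supremum over $\psi$ gives the inequality. For the reverse inequality $\mathcal{C}_{G}^{K}(U,V) \le \mathcal{C}_{H}^{K}(U,V)$: given $\varphi \in \mathcal{F}_{G}^{K}(U,V)$, Lemma \ref{restrictionlemma} applies because $H$ is open, contains $K$, and $U \subseteq H$, so $\varphi|_{H} \in \mathcal{F}_{H}^{K}(U, V \cap H) = \mathcal{F}_{H}^{K}(U,V)$. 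The one genuine observation is the comparison of integrals: $\varphi_{+}$ is supported in $U \subseteq H$, so $\int_{G}\varphi_{+}\,\textup{d}\lambda_{G} = \int_{H}\varphi_{+}\,\textup{d}\lambda_{H}$, whereas $\int_{G}\varphi_{-}\,\textup{d}\lambda_{G} \ge \int_{H}\varphi_{-}\,\textup{d}\lambda_{H}$; hence $\int_{G}\varphi\,\textup{d}\lambda_{G} \le \int_{H}\varphi|_{H}\,\textup{d}\lambda_{H} \le \mathcal{C}_{H}^{K}(U,V)$. Taking the supremum over $\varphi$ and combining the two inequalities yields $\mathcal{C}_{G}^{K}(U,V) = \mathcal{C}_{H}^{K}(U,V)$.

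The main obstacle is the first step — exhibiting an open $\sigma$-compact $H$ that contains all of $U$, and not merely an identity neighbourhood sitting inside $U$; this is exactly what Lemma \ref{sigma-compact-lemma} provides, and the point to verify is that its proof transfers to non-Abelian $G$ (it should, being a statement about Haar-measurable sets and open $\sigma$-compact subgroups). Everything after that is bookkeeping: the compatibility of the normalisations of $\lambda_{G}$, $\lambda_{H}$, $\lambda_{K}$; the harmless splitting $\int \varphi = \int \varphi_{+} - \int \varphi_{-}$ (with the usual conventions for possibly infinite integrals); and the fact, used twice, that discarding $G \setminus H$, on which every admissible $\varphi$ is non-positive because $\supp \varphi_{+} \subseteq U \subseteq H$, cannot decrease the integral.
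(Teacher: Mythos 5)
Your proposal is correct and follows essentially the same route as the paper: Lemma \ref{sigma-compact-lemma} produces the open $\sigma$-compact subgroup $H \supseteq U \cup K$, the trivial extension (Lemma \ref{extensionlemma}) gives $\mathcal{C}_{H}^{K}(U,V) \le \mathcal{C}_{G}^{K}(U,V)$, and the restriction (Lemma \ref{restrictionlemma}) together with the observation that every admissible $\varphi$ is non-positive on $G\setminus H$ gives the reverse inequality. The only cosmetic differences are that the paper applies Lemma \ref{sigma-compact-lemma} directly to the $\sigma$-finite set $U \cup K$ rather than enlarging $H_{0}$ by $K$ afterwards, and phrases the two bounds via extremal sequences with $1/n$ error terms instead of taking suprema function by function; your remark that the cited lemma must be checked to transfer from the LCA setting to general locally compact groups is a fair point that the paper passes over silently.
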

\begin{proof}
    On account of Lemma \ref{sigma-compact-lemma}, let $H$ be an open $\sigma$-compact subgroup of $G$ containing the union $U \cup K$ of $U$ and the subgroup $K$. Equip $H$ with Haar measure $\lambda_{H}: = \lambda_{G}|_H$, i.e., the Haar measure of $G$ restricted to $H$. Let $(f_n)_{n \in \mathbb{N}}$ be a $\mathcal{C}_{G}^{K}(U,V)$-extremal sequence in $\mathcal{F}_{G}^{K}(U, V)$ such that
    \begin{equation*}
        \int_{G}f_n(g)\textup{d}\lambda_{G}(g) \ge \mathcal{C}_{G}^{K}(U,V) - \frac{1}{n} \textup{ for all } n \in \mathbb{N}.
    \end{equation*}
Let $f_n|_H: H \to \mathbb{R}$ denote the restriction of $f_n$ to $H$. Observe that for all $n \in \mathbb{N}$, we have
\begin{equation}\label{splitintegral2}
    \int_{G}f_n(g)\textup{d}\lambda_{G}(g) = \int_{H}f_n(g)\textup{d}\lambda_{G}(g) + \int_{G \setminus H}f_n(g)\textup{d}\lambda_{G}(g).
\end{equation}
Since $\supp{f_n} \subset U$ is contained in $H$, we have $f_n(g) \le 0$ for all $ g \in G \setminus H$. Therefore,
\begin{equation}\label{outsideH}
    \int_{G \setminus H}f_{n}(g)\textup{d}\lambda_{G}(g) \le 0.
\end{equation}
From \eqref{splitintegral2} and \eqref{outsideH}, it follows that 
\begin{equation}\label{restrictioninequality}
    \int_{H}f_n|_{H}(g)\textup{d}\lambda_{H}(g) = \int_{H}f_n(g)\textup{d}\lambda_{G}(g) \ge \int_{G}f_n(g)\textup{d}\lambda_{G}(g).
\end{equation}
Now, by definition of $\mathcal{C}_{H}^{K}(U, V)$ and inequality \eqref{restrictioninequality}, we have
\begin{equation*}
    \mathcal{C}_{H}^{K}(U, V) \ge \int_{H}f_n|_{H}(g)\textup{d}\lambda_{H}(g) \ge \int_{G}f_n(g)\textup{d}\lambda_{G}(g) \ge \mathcal{C}_{G}^{K}(U,V) - \frac{1}{n}.
\end{equation*}
Thus $\mathcal{C}_{H}^{K}(U,V) \ge \mathcal{C}_{G}^{K}(U,V)$. \par
For the reverse inequality, let $(h_n)_{n \in \mathbb{N}}$ be a  $\mathcal{C}_{H}^{K}(U,V)$-extremal sequence in $\mathcal{F}_{H}^{K}(U,V)$ such that
\begin{equation*}
    \int_{H}h_n(g)\textup{d}\lambda_{H}(g) \ge \mathcal{C}_{H}^{K}(U,V) - \frac{1}{n} \textup{ for all } n \in \mathbb{N}.
\end{equation*}
The sequence $(\widetilde{h_n})_{n \in \mathbb{N}}$ of trivial extensions is in $\mathcal{F}_{G}^{K}(U,V)$, by Lemma \ref{restrictextendKbi}. Since $\widetilde{h_{n}}$ vanishes outside of $H$, we have that for all $n \in \mathbb{N}$,
			
			\begin{equation*}
				\mathcal{C}_{G}^{K}(U,V) \ge \int_{G}\widetilde{h_{n}}(g)\textup{d}\lambda_{G}(g) = \int_{H}h_{n}(g)\textup{d}\lambda_{H}(g) \ge \mathcal{C}_{H}^{K}(U,V) - \frac{1}{n}.
			\end{equation*}
			
			Thus, $\mathcal{C}_{G}^{K}(U,V) \ge \mathcal{T}_{H}^{K}(U)$, and hence 

   \begin{equation*}
        \mathcal{C}_{G}^{K}(U,V) = \mathcal{C}_{H}^{K}(U,V),
   \end{equation*}
   as required.
\end{proof}
\begin{theorem}\label{generalKtheorem}Let $G$ be an amenable locally compact group, $K$ a compact subgroup of $G$, $U$ a closed $K$-bi-invariant symmetric identity neighbourhood having finite Haar measure, and $V$ a closed $K$-bi-invariant symmetric subset of $G$. Then there exists an extremal function for the Delsarte constant $\mathcal{C}_{G}^{K}(U,V)$. 
\end{theorem}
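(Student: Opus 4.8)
The plan is to reduce the general case to the $\sigma$-compact case already settled in Theorem \ref{Ksigmacompactcase}, using the reduction Lemma \ref{Kreduction} to pass to an open $\sigma$-compact subgroup and the extension Lemma \ref{extensionlemma} to transport the resulting extremiser back to $G$. No new analysis is needed; the work is to check that the hypotheses of the two lemmas and of Theorem \ref{Ksigmacompactcase} line up.

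First I would note that since $U$ has finite Haar measure it is a fortiori $\sigma$-finite, so Lemma \ref{Kreduction} applies and produces an open $\sigma$-compact subgroup $H$ of $G$ with $K \subset H$ and $U \subset H$ such that $\mathcal{C}_{G}^{K}(U,V) = \mathcal{C}_{H}^{K}(U, V\cap H)$ (here $V\cap H$ is the correct set in $H$, since $V$ need not lie in $H$, and this is what the notation $\mathcal{C}_{H}^{K}(U,V)$ in Lemma \ref{Kreduction} abbreviates). Next I would verify that $(H, K, U, V\cap H)$ satisfies the hypotheses of Theorem \ref{Ksigmacompactcase}: $H$ is $\sigma$-compact by construction and amenable, being a closed (indeed open) subgroup of the amenable group $G$ (cf. \cite{pier}); $K$ is a compact subgroup of $H$; since $U \subset H$ and $H$ is open, $U$ is a closed $K$-bi-invariant symmetric identity neighbourhood in $H$ with $\lambda_{H}(U) = \lambda_{G}(U) < \infty$; and $V\cap H$ is a closed $K$-bi-invariant symmetric subset of $H$ (closedness, symmetry and $K$-bi-invariance of $V$ are all inherited by $V\cap H$ because $H$ itself is closed, symmetric and $K$-bi-invariant). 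Theorem \ref{Ksigmacompactcase} then yields $\varphi \in \mathcal{F}_{H}^{K}(U, V\cap H)$ with $\int_{H}\varphi\,\textup{d}\lambda_{H} = \mathcal{C}_{H}^{K}(U, V\cap H)$.

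Finally I would pass back to $G$: by Lemma \ref{extensionlemma} (using Lemma \ref{restrictextendKbi} for $K$-bi-invariance) the trivial extension $\widetilde{\varphi}\colon G \to \mathbb{R}$ lies in $\mathcal{F}_{G}^{K}(U,V)$, and since $\widetilde{\varphi}$ vanishes off the open subgroup $H$,
\[
\int_{G}\widetilde{\varphi}(g)\,\textup{d}\lambda_{G}(g) = \int_{H}\varphi(g)\,\textup{d}\lambda_{H}(g) = \mathcal{C}_{H}^{K}(U, V\cap H) = \mathcal{C}_{G}^{K}(U,V),
\]
so $\widetilde{\varphi}$ is an extremal function for $\mathcal{C}_{G}^{K}(U,V)$, as required. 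The main (and rather mild) obstacle here is purely the bookkeeping: making sure that $\sigma$-finiteness of $U$ really is automatic from finite measure, that amenability genuinely transfers to the open subgroup $H$, and that replacing $V$ by $V\cap H$ and then extending trivially preserves every structural property needed, so that the chain of equalities of extremal constants closes up exactly.
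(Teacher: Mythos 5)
Your proposal is correct and follows essentially the same route as the paper: reduce via Lemma \ref{Kreduction} to an open $\sigma$-compact subgroup $H$ containing $K$ and $U$, apply Theorem \ref{Ksigmacompactcase} there, and transport the extremiser back by trivial extension. If anything, you are slightly more careful than the paper in explicitly checking that $H$ inherits amenability from $G$ and in tracking the replacement of $V$ by $V \cap H$, both of which are needed for Theorem \ref{Ksigmacompactcase} to apply.
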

\begin{proof}
On account of Theorem \ref{Kreduction}, let $H$ be an open $\sigma$-compact subgroup of $G$ containing $K$ and $U$ such that $\mathcal{C}_{H}^{K}(U,V) = \mathcal{C}_{G}^{K}(U,V)$. By Theorem \ref{Ksigmacompactcase}, there exists an extremal function $f \in \mathcal{F}_{H}^{K}(U, V)$ such that 
\begin{equation*}
    \int_{H}f(g)\mbox{d}\lambda_{H}(g) = \mathcal{C}_{H}^{K}(U,V).
\end{equation*}
Let $\widetilde{f}$ be the trivial extension of $f$. By Lemma \ref{restrictextendKbi}(ii), $\widetilde{f} \in \mathcal{F}_{G}^{K}(U,V)$. We claim that $\widetilde{f}$ is an extremal function for $\mathcal{C}_{G}^{K}(U,V)$. Indeed, 
\begin{equation*}
\int_{G}\widetilde{f}(g)\textup{d}\lambda_{G}(g) = \int_{H}f(g)\textup{d}\lambda_{H}(g) = \mathcal{C}_{H}^{K}(U,V) = \mathcal{C}_{G}^{K}(U,V).
\end{equation*}
\end{proof}
Theorem \ref{generalKtheorem}, together with Theorem \ref{turanconstants} and Theorem \ref{support}, implies the following theorem.
\begin{theorem} Let $G$ be an amenable locally compact group, $K$ a compact subgroup of $G$, $U$ a closed $K$-bi-invariant symmetric neighbourhood of the identity \(e \in G\) having finite Haar measure, and $V$ a closed $K$-bi-invariant symmetric subset of $G$. Then there exists an extremal function for the Delsarte constant $\mathcal{C}_{G/K}(U,V)$.   
\end{theorem}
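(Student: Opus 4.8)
The plan is to push the extremal $K$-bi-invariant function on $G$ supplied by Theorem \ref{generalKtheorem} through the bijective correspondence $J$ of Theorem \ref{correspondence}, and then to read off from Theorems \ref{support} and \ref{turanconstants} that the resulting $G$-invariant kernel lies in $\mathcal{F}_{G/K}(U,V)$ and attains the supremum. Since $J$ was constructed precisely so as to preserve the normalisation $\varphi(e)=1$, the sign of $\varphi$, and the value of the integral, essentially no new analytic input is needed; the proof is a bookkeeping exercise combining the four theorems already available.

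First I would apply Theorem \ref{generalKtheorem}: its hypotheses ($G$ amenable, $U$ a closed $K$-bi-invariant symmetric identity neighbourhood of finite Haar measure, $V$ a closed $K$-bi-invariant symmetric subset) coincide with those of the present statement, so there is an extremal $\varphi \in \mathcal{F}_G^K(U,V)$ with $\int_G \varphi\, d\lambda_G(g) = \mathcal{C}_G^K(U,V)$. Set $\Phi := J(\varphi)$, that is $\Phi(xK,yK) := \varphi(x^{-1}y)$; by Theorem \ref{correspondence} this is a continuous positive definite $G$-invariant real-valued kernel, hence $\Phi \in P^G(G/K)$. To see $\Phi \in \mathcal{F}_{G/K}(U,V)$, note first that $\varphi(e)=1$ together with Theorem \ref{support}(i) gives $\Phi(gK,gK)=1$ for all $g\in G$, so $\Phi \in P_1^G(G/K)$. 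Next, since $\Phi(K,xK)=\varphi(x)$, Theorem \ref{support}(ii) yields $\Phi_\pm(K,xK)=\varphi_\pm(x)$; as $\Phi_\pm$ are $G$-invariant, $\supp\Phi_+ = \overline{\{x\in G : \varphi_+(x)\ne 0\}} = \supp\varphi_+ \subset U$ and likewise $\supp\Phi_- = \supp\varphi_- \subset V$. Thus $\Phi \in \mathcal{F}_{G/K}(U,V)$.

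It then remains to identify the objective value of $\Phi$. By \eqref{independence} the quantity $\int_{G/K}\Phi(xK,yK)\,d\mu(yK)$ is independent of $x$ and equals $\int_{G/K}\Phi(K,yK)\,d\mu(yK)$, which by Theorem \ref{support}(iii) equals $\int_G \varphi\, d\lambda_G(g)$; combining this with Theorem \ref{turanconstants} gives $\int_{G/K}\Phi(K,yK)\,d\mu(yK) = \mathcal{C}_G^K(U,V) = \mathcal{C}_{G/K}(U,V)$. Hence $\Phi$ attains the supremum in \eqref{sphericaldelsarteconstant} and is the desired extremal kernel.

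The only point requiring care — and the one I would flag as the main, albeit mild, obstacle — is that Theorem \ref{support}(iii) is stated for $\varphi\in C_c^K(G)$, whereas the extremal $\varphi$ is only continuous and not necessarily compactly supported. However, as established inside the proof of Theorem \ref{Ksigmacompactcase}, amenability forces $\varphi \in L^1(G)$ with $\int_G \varphi_+\, d\lambda_G(g) \le \lambda_G(U) < \infty$, so one extends the Weyl integration formula \eqref{weylintegrationformula} to the nonnegative integrable functions $\varphi_+$ and $\varphi_-$ by monotone convergence from $C_c(G)$ and subtracts, which extends the identity of Theorem \ref{support}(iii) to $\varphi$ (and shows $\Phi(K,\cdot K)$ is $\mu$-integrable). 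This is routine, so once Theorems \ref{correspondence}, \ref{support}, \ref{turanconstants}, and \ref{generalKtheorem} are in hand the argument is complete.
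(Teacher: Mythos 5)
Your proposal is correct and follows exactly the route the paper intends: the paper states this theorem as an immediate consequence of Theorems \ref{generalKtheorem}, \ref{turanconstants}, and \ref{support}, and your write-up is precisely that deduction, pushing the extremal $K$-bi-invariant function through $J$ and checking membership in $\mathcal{F}_{G/K}(U,V)$ and the value of the integral. Your observation that Theorem \ref{support}(iii) is stated only for $C_c^{K}(G)$ and must be extended to the integrable extremal function by monotone convergence is a genuine (if minor) gap in the paper's implicit argument, and your fix is the right one.
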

Specialising to the sphere $\mathbb{S}^{d} \cong \mathrm{SO}(d+1)/\mathrm{SO}(d)$, and noting that \(\mathrm{SO}(d+1)\) is amenable on account of being compact, have the following corollary.
\begin{corollary} There exists an extremal function for the Tur\'an constant   $\mathcal{T}_{\mathbb{S}^d}(c)$ in \eqref{spherturan}.
\end{corollary}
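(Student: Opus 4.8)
The plan is to deduce the corollary directly from the preceding theorem on the existence of an extremiser for $\mathcal{C}_{G/K}(U,V)$ (equivalently $\mathcal{C}_G^K(U,V)$) by checking that the spherical Tur\'an constant $\mathcal{T}_{\mathbb{S}^d}(c)$ is a special case of one of these Delsarte-type constants with the hypotheses of that theorem satisfied. Concretely, take $G = \mathrm{SO}(d+1)$, $K \cong \mathrm{SO}(d)$ the stabiliser of the north pole $x_0 = (0,\ldots,0,1)$, and identify $\mathbb{S}^d$ with $G/K$. For $0 < c \le \pi$ let $U = \{T \in \mathrm{SO}(d+1) : \theta(Tx_0, x_0) \le c\}$, as in the discussion following \eqref{invturan}. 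First I would recall from that discussion that $U$ is a closed, symmetric, $K$-bi-invariant identity neighbourhood, that it has finite (indeed, total) Haar measure since $G$ is compact, and that under the canonical projection $p : G/K \to \mathbb{S}^d$ its image is exactly the spherical cap of radius $c$ about $x_0$; hence $\mathcal{T}_{\mathbb{S}^d}(c) = \mathcal{T}_{G/K}(U) = \mathcal{C}_{G/K}(U,U)$.

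Next I would invoke the fact, noted in the excerpt, that $\mathrm{SO}(d+1)$ is amenable because it is compact (every compact group is amenable in the sense of \cite{pier}, as the normalised Haar measure is an invariant mean). With $V = U$, the pair $(U,V)$ satisfies all the hypotheses of the theorem immediately preceding the corollary: $G$ amenable locally compact, $K$ compact, $U$ a closed $K$-bi-invariant symmetric identity neighbourhood of finite Haar measure, $V$ a closed $K$-bi-invariant symmetric subset. Therefore that theorem yields an extremal function for $\mathcal{C}_{G/K}(U,U) = \mathcal{T}_{G/K}(U)$.

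Finally I would translate this extremiser back to the sphere. The extremal kernel $\Phi \in \mathcal{F}_{G/K}(U)$ is a continuous positive definite $G$-invariant kernel on $G/K \times G/K$ with $\Phi(gK,gK)=1$ and $\supp \Phi \subset U$; by $G$-invariance (see Theorem \ref{correspondence} and the surrounding discussion) there is a continuous $\psi : [0,\pi] \to \mathbb{R}$ with $\Phi(xK,yK) = \psi(\theta(p(xK), p(yK)))$, $\psi(0)=1$, and $\psi(t)=0$ for $t \ge c$, i.e.\ $\psi \in \Psi^d_c$, and $\int_{G/K} \Phi(K,gK)\,\mathrm{d}\mu(gK) = \int_{\mathbb{S}^d} \psi(\theta(x_0,y))\,\mathrm{d}y$ after normalising the surface measure so that it is the pushforward of $\mu$. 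Matching the normalisations of Haar/invariant measure and surface measure is the only real bookkeeping point; once that is fixed, $\psi$ attains $\mathcal{T}_{\mathbb{S}^d}(c)$, proving the corollary. I do not expect a genuine obstacle here — the substance is entirely in the preceding theorem — so the ``hard part'' is merely to verify cleanly that all the structural hypotheses on $U$ transfer and that the measure normalisations are compatible.
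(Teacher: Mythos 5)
Your proposal is correct and follows essentially the same route as the paper: the paper derives this corollary by specialising the preceding existence theorem to $G=\mathrm{SO}(d+1)$, $K=\mathrm{SO}(d)$ with $U=V$ the $K$-bi-invariant neighbourhood $\{T:\theta(Tx_0,x_0)\le c\}$, using that $\mathrm{SO}(d+1)$ is amenable because it is compact and that $\mathcal{T}_{\mathbb{S}^d}(c)=\mathcal{T}_{G/K}(U)=\mathcal{C}_{G/K}(U,U)$ as established in the discussion following \eqref{invturan}. Your additional remarks on closedness of $U$, finiteness of its Haar measure, and the identification of $G$-invariant kernels with isotropic ones are exactly the bookkeeping the paper leaves implicit.
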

We close this section by observing that since every locally compact Abelian group is amenable, Theorem \ref{generalKtheorem} generalises the existence results in \cite{berdrevram, ramabulana}.

\section{convolution roots}
Following \cite{ziegel}, for kernels $\Phi_1,\Phi_2: \mathbb{S}^d \times \mathbb{S}^d \to \mathbb{R}$, their \textit{spherical convolution} is defined to be
\begin{equation}\label{sphericalconvolution1}
    \Phi_1\ast \Phi_2(x,y):= \int_{\mathbb{S}^d}\Phi_1(x,z)\Phi_2(z,y)\textup{d}z
\end{equation}
for all $x,y \in \mathbb{S}^d$. A kernel $\Phi: \mathbb{S}^d \times \mathbb{S}^d \to \mathbb{R}$ is said to have a \textit{spherical convolution root} if there exists a kernel $\Psi: \mathbb{S}^d \times \mathbb{S}^d \to \mathbb{R}$ such that $\Phi = \Psi \ast \Psi$. In \cite{ziegel}, the question of the existence of an isotropic spherical convolution root for a positive definite kernel on the sphere is considered. In particular, the following was proved.
\begin{theorem}[See {\cite[Theorem 3.2]{ziegel}}] Any continuous isotropic positive definite kernel $\Phi: \mathbb{S}^d \times \mathbb{S}^d \to \mathbb{R}$ has an isotropic spherical convolution root $\Psi \in L^2(\mathbb{S}^d \times \mathbb{S}^d)$.
\end{theorem}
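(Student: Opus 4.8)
The plan is to leverage the harmonic analysis of the compact Gelfand pair $(\mathrm{SO}(d+1), \mathrm{SO}(d))$ together with the correspondence of Theorem \ref{correspondence}. First I would translate the problem to the group: via the bijection $J$, an isotropic continuous positive definite kernel $\Phi$ on $\mathbb{S}^d$ corresponds to a continuous positive definite $\mathrm{SO}(d)$-bi-invariant function $\varphi$ on $G = \mathrm{SO}(d+1)$, and Theorem \ref{preservesconvolution} tells us that $J$ intertwines convolution of kernels with convolution of functions; so it suffices to produce a $\mathrm{SO}(d)$-bi-invariant square root $\psi$ with $\psi \ast \psi = \varphi$, and then set $\Psi = J(\psi)$. (One should check that $J$ extends suitably to $L^2$-type objects, since the root is only claimed in $L^2$; the spherical convolution formula \eqref{sphericalconvolution1} agrees with \eqref{spheicalconvolution2} once $\mathbb{S}^d$ is identified with $G/K$ with normalised invariant measure, so this is bookkeeping.)

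The heart of the matter is the existence of the $K$-bi-invariant convolution root, which I would obtain from Bochner-type / spectral theory for the commutative convolution algebra $L^1(G)^K$. Since $G$ is compact, the spherical dual $Z$ of the Gelfand pair is discrete, and every $\mathrm{SO}(d)$-bi-invariant function in $L^2$ expands in a series $\varphi = \sum_{\omega \in Z} a_\omega \, d_\omega \, \omega$ (the spherical functions $\omega$ here being, up to normalisation, the classical Gegenbauer/Legendre polynomials evaluated at $\cos\theta$), with convolution acting diagonally: $\omega \ast \omega = d_\omega^{-1}\omega$ and distinct spherical functions convolving to zero. Positive definiteness of $\varphi$ forces every Fourier coefficient $\widehat{\varphi}(\omega) \ge 0$ (this is the Bochner theorem for the Gelfand pair, e.g. via \cite[Section 6.4]{dijk}; alternatively it is the classical Schoenberg characterisation of $\Psi^d$). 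Then I would \emph{define} $\psi$ by taking $\widehat{\psi}(\omega) := \sqrt{\widehat{\varphi}(\omega)}$ for all $\omega \in Z$, i.e. $\psi = \sum_\omega \sqrt{a_\omega}\, d_\omega\, \omega$. One checks $\widehat{\psi \ast \psi} = \widehat{\psi}^2 = \widehat{\varphi}$, so $\psi \ast \psi = \varphi$ by uniqueness of Fourier expansions; $\psi$ is automatically $K$-bi-invariant and real-valued (spherical functions here are real) and $\mathrm{SO}(d)$-invariance is built in.

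The step I expect to be the main obstacle is showing $\psi \in L^2(G)$, i.e. that $\sum_\omega \widehat{\psi}(\omega)^2 d_\omega = \sum_\omega \sqrt{a_\omega}\, d_\omega < \infty$ — a priori we only know $\sum_\omega a_\omega d_\omega < \infty$ from $\varphi$ being continuous and positive definite (so $\varphi \in L^2$, and in fact $\varphi(e) = \sum_\omega a_\omega d_\omega < \infty$). The passage from summability of $a_\omega d_\omega$ to summability of $\sqrt{a_\omega}\, d_\omega$ is exactly where one needs a growth estimate on the dimensions $d_\omega$ of the spherical representations; for the sphere these are the dimensions of the spaces of spherical harmonics, which grow polynomially in the degree $n$ like $n^{d-1}$, while continuity of $\varphi$ via absolute convergence of its ultraspherical series gives enough decay of $a_n$ to make $\sqrt{a_n}\, d_\omega$ summable after a Cauchy–Schwarz argument against a convergent comparison series. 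This is precisely the quantitative input that restricts the argument to the Gelfand-pair (in fact, essentially compact symmetric space) setting, and I would isolate it as a lemma: \emph{for a compact Gelfand pair, if $\varphi \in P(G,K)$ is continuous then $\sqrt{\widehat\varphi} \in \ell^1(Z, d_\omega\, \mathrm{d}\omega)$, hence defines an $L^2$ function}. Once that lemma is in hand, the remaining verifications — that $\psi\ast\psi$ really reproduces $\varphi$, and that applying $J$ (extended to this $L^2$ setting) yields the desired isotropic spherical convolution root of $\Phi$ — are routine, and I would close by remarking that specialising $G = \mathrm{SO}(d+1)$, $K = \mathrm{SO}(d)$ recovers \cite[Theorem 3.2]{ziegel}.
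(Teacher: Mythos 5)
Your overall strategy --- transfer to the group $G=\mathrm{SO}(d+1)$, $K=\mathrm{SO}(d)$ via $J$, use nonnegativity of the spherical Fourier transform of $\varphi$, define the root by $\widehat{\psi}=\sqrt{\widehat{\varphi}}$, invert, and pull back with Theorem \ref{preservesconvolution} --- is exactly the route the paper takes: it proves the general compact-Gelfand-pair statement this way and recovers the sphere case as a corollary.

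However, the step you single out as the main obstacle contains a concrete error. With your normalisation $\varphi=\sum_{\omega}a_{\omega}d_{\omega}\omega$ and $\widehat{\psi}(\omega)=\sqrt{a_{\omega}}$, the Plancherel norm is
\[
\|\psi\|_{L^{2}(G)}^{2}=\sum_{\omega}d_{\omega}\,\widehat{\psi}(\omega)^{2}=\sum_{\omega}d_{\omega}a_{\omega}=\varphi(e)<\infty,
\]
not $\sum_{\omega}\sqrt{a_{\omega}}\,d_{\omega}$. The quantity you wrote down is the $\ell^{1}(Z,d_{\omega})$ norm of $\widehat{\psi}$, not its $\ell^{2}$ norm, and the lemma you propose to isolate (that $\sqrt{\widehat{\varphi}}\in\ell^{1}(Z,d_{\omega})$ for every continuous $\varphi\in P(G,K)$) is false: continuity and positive definiteness only give $\sum_{\omega}a_{\omega}d_{\omega}<\infty$, and since $\sum_{\omega}d_{\omega}=\infty$ one can choose admissible coefficients with $\sum_{\omega}\sqrt{a_{\omega}}\,d_{\omega}=\infty$ (on $\mathbb{S}^{d}$ take $a_{n}\asymp d_{n}^{-1}n^{-2}$, so $a_{n}d_{n}\asymp n^{-2}$ is summable while $\sqrt{a_{n}}\,d_{n}\asymp n^{(d-3)/2}$ is not). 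The Cauchy--Schwarz argument against a convergent comparison series that you sketch therefore cannot close that gap. Fortunately none of it is needed: $\psi\in L^{2}(G)$ is immediate from $\widehat{\varphi}\in\ell^{1}(Z,d_{\omega})$, i.e.\ from $\varphi(e)<\infty$, which is precisely how the paper argues (it passes from $\widehat{f}\in L^{1}(Z)$ to $\sqrt{\widehat{f}}\in L^{2}(Z)$ in one line). Replace your ``key lemma'' by this observation, note that $\psi\in L^{2}(G)\subset L^{1}(G)$ on the compact group so that $\widehat{\psi\ast\psi}=\widehat{\psi}^{\,2}=\widehat{\varphi}$ and inversion gives $\psi\ast\psi=\varphi$, and your argument is complete and coincides with the paper's.
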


In this section, we consider a similar question in the more general setting of compact Gelfand pairs.

Recall that for kernels $\Phi_1, \Phi_2: G/K \times G/K \to \mathbb{R}$, their spherical convolution is
\begin{equation}\label{spheicalconvolution2}
    \Phi_1\ast \Phi_2(xK,yK):= \int_{G/K}\Phi_1(xK,zK)\Phi_2(zK,yK)\textup{d}\mu (zK)
\end{equation}
for all $x, y \in G$. Clearly, when $G = \mathrm{SO}(d)$ and $K = \mathrm{SO}(d-1)$, then $G/K \cong \mathbb{S}^d$ and \eqref{spheicalconvolution2} reduces to \eqref{sphericalconvolution1}. So, we consider the question of whether for any continuous positive definite $G$-invariant kernel $\Phi: G/K \times G/K \to \mathbb{R}$ there exists a $G$-invariant kernel $\Psi: G/K \times G/K \to \mathbb{R}$ such that $\Phi = \Psi \ast \Psi$. We shall answer this question in the case where $(G,K)$ is a compact Gelfand pair. Note that $(\mathrm{SO}(d),\mathrm{SO}(d-1))$ is a compact Gelfand pair, so our result contains that of \cite{ziegel}. The argument in \cite{ziegel} is specific to the unit sphere $\mathbb{S}^d$, while our argument is different and works for all compact Gelfand pairs.

\begin{theorem}
    Let $(G,K)$ be a compact Gelfand pair, then for any continuous positive definite $K$-bi-invariant function $f: G \to \mathbb{R}$, there exists a positive definite $K$-bi-invariant function $g: G \to \mathbb{C}$ in \(L^2(G)\) such that $f = g\ast g$.
\end{theorem}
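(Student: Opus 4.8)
The plan is to use the Peter--Weyl theorem together with the structure theory of compact Gelfand pairs. Since $(G,K)$ is a compact Gelfand pair, the commutative convolution algebra $L^1(G)^K$ decomposes spectrally, and more concretely: the space $L^2(G)^K$ of $K$-bi-invariant square-integrable functions decomposes as an orthogonal Hilbert-space direct sum $\bigoplus_{\pi} \mathbb{C}\varphi_\pi$, where $\pi$ ranges over the (countable) subset $\widehat{G}_K$ of the unitary dual consisting of class-one (spherical) representations, and $\varphi_\pi$ is the associated zonal spherical function, normalised by $\varphi_\pi(e)=1$. The key facts I would invoke are: (a) each $\varphi_\pi$ is a continuous positive definite $K$-bi-invariant function; (b) the $\varphi_\pi$ are pairwise orthogonal in $L^2(G)$ with $\|\varphi_\pi\|_{L^2(G)}^2 = 1/d_\pi$ where $d_\pi = \dim \pi$; and (c) under convolution they are essentially orthogonal idempotents: $\varphi_\pi \ast \varphi_{\pi'} = \delta_{\pi,\pi'}\, d_\pi^{-1}\varphi_\pi$. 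These are standard consequences of the Schur orthogonality relations specialised to the one-dimensional space of $K$-fixed vectors.

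First I would expand the given continuous positive definite $K$-bi-invariant $f$ in this basis: write $f = \sum_{\pi \in \widehat{G}_K} a_\pi\, d_\pi\, \varphi_\pi$ in $L^2(G)$, where $a_\pi = \widehat{f}(\pi)$ is the spherical Fourier coefficient, $a_\pi = \int_G f(g)\varphi_\pi(g^{-1})\,\mathrm{d}\lambda_G(g)$. The positive definiteness of $f$ forces $a_\pi \ge 0$ for every $\pi$ (this is precisely Bochner's theorem in the Gelfand pair setting: a continuous function is positive definite if and only if all its spherical Fourier coefficients are non-negative). Moreover, since $f$ is continuous, $\|f\|_{L^2(G)}^2 = \sum_\pi a_\pi^2 d_\pi < \infty$. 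Now set $b_\pi := \sqrt{a_\pi} \ge 0$ and define
\begin{equation*}
g := \sum_{\pi \in \widehat{G}_K} b_\pi\, d_\pi\, \varphi_\pi.
\end{equation*}
I need $g \in L^2(G)$, i.e. $\sum_\pi b_\pi^2 d_\pi = \sum_\pi a_\pi d_\pi < \infty$. This follows because $f \in L^1(G)$ (as $G$ is compact and $f$ continuous) and the spherical Fourier coefficients of an $L^1$ function, evaluated against the $\varphi_\pi$, satisfy $\sum_\pi |a_\pi| d_\pi = \sum_\pi a_\pi d_\pi < \infty$; more directly, $\sum_\pi a_\pi d_\pi = \sum_\pi \widehat{f}(\pi) d_\pi$ converges because it equals $f(e)$ by the spherical inversion formula (Plancherel/inversion for compact Gelfand pairs), and $f(e) < \infty$. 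So $g$ is a well-defined element of $L^2(G)$; it is $K$-bi-invariant since each $\varphi_\pi$ is, and positive definite since its spherical coefficients $b_\pi^2 \cdot$ (something) — more carefully, one checks $g$ is positive definite because $g = g^\# \ast h$ for an appropriate square root, or directly because $\widehat{g}(\pi) = b_\pi \ge 0$.

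Then I would verify $g \ast g = f$: using the convolution idempotent relations $\varphi_\pi \ast \varphi_{\pi'} = \delta_{\pi\pi'} d_\pi^{-1} \varphi_\pi$ termwise,
\begin{equation*}
g \ast g = \sum_{\pi,\pi'} b_\pi b_{\pi'} d_\pi d_{\pi'}\, \varphi_\pi \ast \varphi_{\pi'} = \sum_\pi b_\pi^2 d_\pi^2 \cdot d_\pi^{-1} \varphi_\pi = \sum_\pi a_\pi d_\pi\, \varphi_\pi = f,
\end{equation*}
where the interchange of summation and convolution is justified by $L^2$ convergence together with the fact that convolution $L^2(G) \times L^2(G) \to C(G)$ is continuous on a compact group. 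Finally, $g$ is real-valued up to the possible subtlety that the $\varphi_\pi$ need not individually be real-valued; but for a compact Gelfand pair the spherical functions can be taken real (each $\varphi_\pi$ satisfies $\overline{\varphi_\pi} = \varphi_{\bar\pi}$ and $\widehat f$ being real forces a pairing-up, or one simply notes $f$ real and positive definite guarantees a real convolution root after symmetrising $g \mapsto (g + \bar g)/2$ does not work directly for convolution, so instead one uses that $\overline{g}$ also satisfies $\overline g \ast \overline g = \overline f = f$, and replaces $g$ by... ) — I would handle this by observing that we only need $g \in L^2(G)$ complex-valued as the statement allows, so realness of $g$ is not required and this subtlety disappears. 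The main obstacle, and the step requiring the most care, is establishing the convergence $\sum_\pi a_\pi d_\pi < \infty$ rigorously (so that $g \in L^2$) and justifying the termwise convolution identity; both hinge on having the Peter--Weyl/spherical-Plancherel machinery for compact Gelfand pairs stated cleanly, which I would cite from \cite{dijk} or \cite{Wolf2007}.
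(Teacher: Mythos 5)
Your proof is correct and takes essentially the same route as the paper: both arguments take the non-negative square root of the spherical Fourier transform of $f$ and invert it, the only difference being that you realise the spherical dual concretely through the Peter--Weyl decomposition into zonal spherical functions (with the Plancherel weights $d_\pi$ and the convolution idempotent relations $\varphi_\pi \ast \varphi_{\pi'} = \delta_{\pi\pi'}d_\pi^{-1}\varphi_\pi$ written out), whereas the paper works abstractly with the Fourier transform on the spherical dual $Z$ and cites the relevant integrability, inversion and convolution theorems from \cite{dijk}. The convergence $\sum_\pi a_\pi d_\pi = f(e) < \infty$ that you correctly flag as the delicate step is precisely what the paper's appeal to $\widehat f \in L^1(Z)$ supplies.
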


\begin{proof}
    Since $G$ is compact and $f$ is continuous, we have that $f \in L^1(G)$. Then $f \in L^1(Z)$ by Theorem \cite[Theorem 6.4.5]{dijk}. Since $\widehat{f}$ is non-negative, the square root $h=\sqrt{\widehat{f}}$ is well-defined and is in $L^2(Z)$. Now, let $g: G \to \mathbb{C}$ be the inverse Fourier transform of $h$, which exists by \cite[Theorem 6.4.6]{dijk}. Since $G$ is compact and hence of finite Haar measure, the Cauchy-Schwartz inequality implies that $g$ is in $L^1(G)$. It follows that the Fourier transform of $g$ is defined everywhere on $Z$, and is equal to $h$.

    Now, since $g \in L^{1}(G)$, we have that $\widehat{g \ast g} = \widehat{g}\cdot \widehat{g}$ by \cite[Theorem 6.4.3]{dijk}. Hence $\widehat{g \ast g} = h^2 = \widehat{f}$. But $\widehat{f} \in L^2(Z)$ because $f \in L^2(G)$. Hence, taking the $L^2$ inverse Fourier transform on both sides of $\widehat{g \ast g} = \widehat{f}$, we have that $f = g \ast g$ in $L^2(G)$. But both $f$ and $g \ast g$ are continuous, so $f = g \ast g$ everywhere on $G$.
\end{proof}
By Theorem \ref{correspondence} and Theorem \ref{preservesconvolution}, we have the following corollary.
\begin{corollary}
    Let $(G,K)$ be a compact Gelfand pair, then for any continuous positive definite kernel $\Phi: G/K \times G/K \to \mathbb{R}$ there exists a $G$-invariant positive definite kernel $\Psi: G/K \times G/K \to \mathbb{R}$ such that $\Phi = \Psi \ast \Psi$.
\end{corollary}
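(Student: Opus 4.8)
The plan is to transport the question to the group $G$ by means of Theorem~\ref{correspondence} and then apply the convolution–root theorem for $K$-bi-invariant functions just established. So let $\Phi\colon G/K\times G/K\to\mathbb{R}$ be the given continuous positive definite kernel (which we take to be $G$-invariant, as it must be if it is to equal $\Psi\ast\Psi$ for a $G$-invariant $\Psi$, by Lemma~\ref{G-invconv}), and let $\varphi\colon G\ni g\mapsto\Phi(K,gK)\in\mathbb{R}$ be the function associated to it by Theorem~\ref{correspondence}: it is continuous, $K$-bi-invariant, real-valued and positive definite. Since $(G,K)$ is a compact Gelfand pair, the preceding theorem provides a $K$-bi-invariant positive definite function $g\in L^2(G)$ with $\varphi=g\ast g$. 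Define $\Psi(xK,yK):=g(x^{-1}y)$; the $K$-bi-invariance of $g$ makes this independent of the choice of coset representatives, and the purely algebraic computations from the proof of Theorem~\ref{correspondence} show that $\Psi$ is $G$-invariant and positive definite. Note that $\Psi$ is only an $L^2$ kernel, not necessarily continuous, which is all the statement requires.

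Next I would verify that $\Psi$ is real-valued, i.e.\ that $g$ is real-valued — this does not follow formally from the statement of the cited theorem. Since $\varphi$ is real-valued and positive definite we have $\varphi(g^{-1})=\varphi(g)$; substituting $g\mapsto g^{-1}$ in the defining integral of the spherical Fourier transform and using $\overline{\eta(g^{-1})}=\eta(g)$ for a positive definite spherical function $\eta$, together with the inversion-invariance of Haar measure on the compact group $G$, yields $\widehat{\varphi}(\overline{\eta})=\widehat{\varphi}(\eta)$ for all $\eta\in Z$. Hence $h:=\sqrt{\widehat{\varphi}}$ also satisfies $h(\overline{\eta})=h(\eta)$, and since $g$ is the inverse Fourier transform of $h$ and the Plancherel measure on $Z$ is invariant under $\eta\mapsto\overline{\eta}$, one obtains $\widehat{\overline{g}}=\widehat{g}$, whence $\overline{g}=g$ by injectivity of the spherical Fourier transform on $L^1(G)^K$. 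Therefore $\Psi$ takes values in $\mathbb{R}$.

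It remains to prove $\Psi\ast\Psi=\Phi$. I would compute directly: for $x,y\in G$, using the $K$-bi-invariance of $g$, the normalisation $\lambda_K(K)=1$ and the Weyl integration formula \eqref{weylintegrationformula} (extended from $C_c(G)$ to the integrable integrand by the usual approximation),
\begin{equation*}
\begin{split}
\Psi\ast\Psi(xK,yK)&=\int_{G/K}g(x^{-1}z)\,g(z^{-1}y)\,\mathrm{d}\mu(zK)=\int_{G}g(x^{-1}u)\,g(u^{-1}y)\,\mathrm{d}\lambda_{G}(u)\\
&=g\ast g(x^{-1}y)=\varphi(x^{-1}y)=\Phi(xK,yK),
\end{split}
\end{equation*}
where the third equality uses left-invariance of $\lambda_G$ after the substitution $u=xv$, the fourth is the defining relation of $\varphi$, and the last the $G$-invariance of $\Phi$; all integrals converge absolutely by Cauchy--Schwarz on the compact $G$. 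Morally this is just the identity $J(g\ast g)=J(g)\ast J(g)$ of Theorem~\ref{preservesconvolution} with $\varphi_1=\varphi_2=g$, and one could argue that way once one observes that, $g$ being real-valued by the previous paragraph, the proof of Theorem~\ref{preservesconvolution} goes through with $g$ in place of a continuous element of $P(G,K)$.

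The substance here is bookkeeping rather than a deep point: everything difficult is already contained in Theorem~\ref{correspondence}, Theorem~\ref{preservesconvolution} and the convolution–root theorem. The two places that need genuine attention are (i) showing the root $g$ can be taken real-valued — the symmetry of $\widehat{\varphi}$ under $\eta\mapsto\overline{\eta}$ described above — so that the corollary's conclusion $\Psi\colon G/K\times G/K\to\mathbb{R}$ is literally met, and (ii) the mild regularity mismatch that $g$ is produced only in $L^2(G)$, which forces the convolution identity to be read in the $L^2$ (or almost-everywhere) sense and then upgraded using that $g\ast g=\varphi$ is continuous. I expect (i) to be the main obstacle.
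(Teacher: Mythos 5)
Your proposal is correct and follows the same route as the paper, which derives the corollary in one line from Theorem~\ref{correspondence} and Theorem~\ref{preservesconvolution} applied to the $K$-bi-invariant convolution root. The two points you flag --- that the root $g$ produced by the theorem is a priori only complex-valued and only in $L^2(G)$, whereas Theorem~\ref{preservesconvolution} is stated for continuous real-valued elements of $P(G,K)$ --- are genuine gaps in the paper's one-line deduction, and your spherical-Fourier symmetry argument and direct computation via \eqref{weylintegrationformula} close them.
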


\section{Acknowledgements}
The author thanks the University of Cape Town's Shuttleworth Postgraduate Scholarship, Faculty of Science Transformation Fund, and the Science Faculty Research Commitee (through Postgraduate Publishing Incentive), for the financial support provided towards the author's doctoral studies, during which this manuscript was prepared. The author is also thankful to Professor Tilmann Gneiting for providing information regarding the Tur\'an problem. Finally, the author is grateful to Associate Professor Elena E. Berdysheva and Professor Szil\'ard Gy. R\'ev\'esz for the helpful mathematical discussions and careful reading of this manuscript.
\printbibliography
\end{document}